\newtheorem{theorem}{Theorem}
\newtheorem{corollary}{Corollary}
\newtheorem{lemma}{Lemma}
\newtheorem{assumption}{Assumption}
\newtheorem{proposition}{Proposition}
\newcommand{\PP}{\mathbb{P}}
\newcommand{\F}{\mathcal{F}}
\newcommand{\Rd}{\mathbb{R}^{d}}
\newcommand{\Rn}{\mathbb{R}^{n}}
\newcommand{\Rm}{\mathbb{R}^{m}}
\newcommand{\R}{\mathbb{R}}
\newcommand{\Rnd}{\mathbb{R}^{n\times d}}
\newcommand{\Rnm}{\mathbb{R}^{n\times m}}
\newcommand{\E}{\mathbb{E}}
\newcommand{\LL}{\mathcal{L}}
\newcommand{\U}{\mathcal{U}}
\newcommand{\B}{\mathcal{B}}
\newcommand{\tr}{\mbox{tr}}
\newcommand{\uu}{\widehat{u}}
\newcommand{\X}{\widehat{X}}
\newcommand{\x}{\widehat{x}}
\newcommand{\var}{\mbox{Var}}
\date{}
\begin{document}

\title{Nonlinear stochastic receding horizon control: stability, robustness and Monte Carlo methods for control approximation\thanks{This work was supported by AFOSR/AOARD via AOARD-144042.}}

\author{Francesco Bertoli\thanks{F. Bertoli is with the Australian National University (ANU).} \and Adrian N. Bishop\thanks{A.N. Bishop is with the University of Technology Sydney (UTS) and CSIRO. He is also an adjunct Fellow at the Australian National University (ANU). He is supported by the Australian Research Council (ARC) via a Discovery Early Career Researcher Award (DE-120102873).}}

\maketitle

\begin{abstract}
This work considers the stability of nonlinear stochastic receding horizon control when the optimal controller is only computed approximately. A number of general classes of controller approximation error are analysed including deterministic and probabilistic errors and even controller sample and hold errors. In each case, it is shown that the controller approximation errors do not accumulate (even over an infinite time frame) and the process converges exponentially fast to a small neighbourhood of the origin. In addition to this analysis, an approximation method for receding horizon optimal control is proposed based on Monte Carlo simulation. This method is derived via the Feynman-Kac formula which gives a stochastic interpretation for the solution of a Hamilton-Jacobi-Bellman equation associated with the true optimal controller. It is shown, and it is a prime motivation for this study, that this particular controller approximation method practically stabilises the underlying nonlinear process.
\end{abstract}

\section{Introduction}

Receding horizon optimal control (RHC) is a strategy for controlling a dynamical system over an (possibly) infinite horizon where the control input at any instant is derived by solving a finite horizon optimal control problem over a fixed length horizon from that instant forwards. An introduction to RHC can be found in \cite{mayne2000constrained,kwon2006receding}. RHC is a natural extension of finite-horizon optimal control and a natural simplification of infinite-horizon optimal control. The term model predictive control is often used interchangeably with RHC.

The contribution of this work is:

\begin{description}[labelindent=0.5\parindent, leftmargin=2.5\parindent]

  \item[\ 1] We study the stability of continuous-time receding horizon control of nonlinear stochastic systems when the optimal controller computation is only approximate. In particular, we consider a number of classes of controller approximation error including deterministic and probabilistic errors. We also consider controller sample and hold errors, that arise due to real-time computing limitations etc.

  \item[2a] We outline a (Monte Carlo) simulation algorithm for approximating the optimal receding horizon control for nonlinear stochastic continuous-time systems. 

  \item[2b] We connect the controller approximation technique to the stability analysis detailed in this work and show that this Monte Carlo simulation method for controller approximation stabilises the process (in a sense to be made precise). In particular, the approximation errors do not accumulate nor destabilise the system.
\end{description}

The analysis of RHC for nonlinear (deterministic) systems started, largely, with the analysis of Mayne et. al. \cite{mayne1990receding,michalska1993robust}. Even in this early work, stability was considered for RHC in the presence of a number of controller approximation errors. Broad work on this topic in the nonlinear realm is covered in \cite{parisini1995receding,de1998stabilizing,jadbabaie2001unconstrained}. Much work in this area has focused on the incorporation of (deterministic) model uncertainty \cite{michalska1993robust,de1996robustness,magni2001receding,magni2003robust} and/or controller and state constraints \cite{michalska1993robust}. This latter focus concerning constraints is largely beyond the scope of this study, although we comment on possible extensions in our concluding remarks. 

In the stochastic realm, the foundations of optimal control of nonlinear (continuous-time) systems are studied in, e.g., \cite{fleming2006controlled,krylov2008controlled,yong1999stochastic,kushner2013numerical,touzi2012optimal}.
Computational methods for general nonlinear stochastic RHC are given in, e.g., \cite{parisini1998neural,kushner2013numerical,kappen2005path,mceneaney2006max,li2007iterative,kantas2009sequential,stahl2011pf,de2011particle,bierkens2014explicit}. In continuous-time cases, optimal nonlinear RHC has been shown \cite{wei2014stability} to be stabilising under the assumption that an optimal controller is applied (exactly). In this work, we extend \cite{wei2014stability} by showing that stability is retained even in the presence of controller approximation errors. We consider a number of general classes of controller approximation error, including deterministic and probabilistic errors and also controller sample and hold errors. To the best of our knowledge, there has been no investigation on the stochastic stability of (nonlinear stochastic) RHC in the presence of controller approximation errors. The challenge in this case is ensuring that solutions of the controlled diffusion are bounded `almost surely' within some neighbourhood of the origin; we employ a classical stochastic stability analysis (viz. \cite{khasminskii2011stochastic}) with a novel application of the optional sampling theorem \cite{doob1953stochastic}. Dealing with sample and hold-type errors is also challenging as the standard Euler-Maruyama time-discretisation of a stochastic differential equation can be unstable \cite{higham2003exponential}.

In addition to the stability analysis detailed in the preceding, we also outline an approximation method for computing the optimal RHC for nonlinear stochastic continuous-time systems. This method is based on Monte Carlo integral approximation and originates in the work of Kappen \cite{kappen2005linear,kappen2005path} where such techniques were applied in finite-horizon optimal control for nonlinear stochastic systems. The broad idea is that a solution to the Hamilton-Jacobi-Bellman partial differential equation associated with a typical optimal control problem \cite{fleming2006controlled} can be formulated in terms of an expectation over a stochastic trajectory defined by an uncontrolled stochastic differential equation (SDE). Indeed, this relationship between partial differential equations and so-called path-integrals is just a consequence of Feynman-Kac's formula \cite{fleming2006controlled}. This expectation (or path-integral) can then be approximated via Monte Carlo simulation, and since this defines the solution to the Hamilton-Jacobi-Bellman equation, it is a short leap from there to the optimal controller (or its approximation). This numerical algorithm for optimal control has received interest in, e.g., \cite{van2008graphical,todorov2009efficient,theodorou2010generalized,BroekUAI2010,morzfeld2014implicit} where a number of generalisations (and applications) have been investigated. To the best of our knowledge, no investigation of the stochastic stability of this approximation method has been considered.

The stability properties of this Monte Carlo based controller approximation method are also analysed. In particular, we relate this approximation method to the more general stability analysis provided in this work, and we show that this method stabilises the system (in a specific sense to be defined). This stability analysis justifies application of this control algorithm over an extended, possibly infinite, time interval. 

The remainder of this work is organised as follows: In Section 2 we outline the basic nonlinear stochastic RHC problem and some related notation. In Section 3 we consider the stability of the nonlinear stochastic RHC regime. In particular, we note the stabilisation properties of the optimal (ideal) controller and we analyze the stability properties of a number of controller approximation methods. In Section 4 we introduce the Monte Carlo based algorithm for controller approximation and we relate this algorithm and its stability properties to the results given in the previous section. In Section 5 we provide some concluding remarks and comment on a number of possible extensions.

\section{Nonlinear Stochastic Receding Horizon Optimal Control}

Let $(\Omega,\F,\PP)$ be a complete probability space equipped with the natural filtration $(\F_{t})_{t\geq0}$ generated by a fixed, standard, Wiener process $W_t(\omega):[0,\infty)\times\Omega\to\Rd$. We consider a nonlinear controlled process $X^{0,x_0,u}_t(\omega):[0,\infty)\times\Omega\to\Rn$
\begin{equation}
	dX^{0,x_{0},u}_{t} = f(X^{0,x_{0},u}_{t},u_{t})dt + g(X^{0,x_{0},u}_{t})dW_{t} \label{generalsystemcontrol}
\end{equation}
\noindent with $X^{0,x_{0},u}_{0} = x_{0} \in\Rn$. We assume $f:\Rn\times\Rm\to\Rn$ and $g:\Rn\to\Rnd$ to be continuous. The stochastic integrals in this paper are to be read in the Ito sense \cite{arnold1974stochastic}.  Moreover we assume that 
$$
	|f(x,u)-f(y,u)|+|g(x)-g(y)| \leq c_1|x-y|, \quad\forall(x,y,u)\in\Rn\times\Rn\times U 
$$
$$
	|f(x,u)-f(x,v)| \leq c_1|u-v|, \quad\forall(x,u,v)\in\Rn\times U\times U
$$
for some finite constant $c_1>0$. Let $t\geq s\geq0$, then the superscripts $X^{s,x,u}_t$ denote that the initial state at $s\geq0$ is $x$ and the control history is $(u_{t})_{t\geq s}$.

Fix a time interval $[t_0,t_1]$. Then a control $u_t(\omega):[t_0,t_1]\times\Omega\to U$ is said to be admissible if it is (progressively) Borel measurable and
$$
\E \left [ \int_{t_0}^{t_1} |u(X^{t_0,x,u}_s)|^q ds \right ] <\infty, \quad \forall x \in \R^n,~ q\geq 1
$$
We denote by $\U_{[t_0,t_1]}$ the class of admissible controls on $[t_0,t_1]$. These conditions are sufficient for the existence of a unique, continuous, (strong) solution to the stochastic process; e.g. see \cite{arnold1974stochastic, touzi2012optimal}.

Here we consider control and stabilisation to (a neighbourhood of) the origin; any other desired set point can be substituted via a simple change of coordinates. To this end, we fix $f(0,u(0)) = 0$, i.e. the origin is an equilibrium point for the nominal deterministic system. 

Let $T>0$ be fixed. We associate with (\ref{generalsystemcontrol}) the following receding horizon cost functional
\begin{equation}
	w(t,s,x, u) := \E\left[\phi(X^{t+s,x,u}_{T+t}) + \int_{t+s}^{T+t} \ell(X^{t+s,x,u}_{r},u_{r})dr\right],~~\quad \mathrm{for}~s\in[0,T] \nonumber
\end{equation}
\noindent where $\ell:\Rn\times\Rm\to[0,\infty)$ and $\phi:\Rn\to[0,\infty)$ are non-negative continuous functions that satisfy
$$
	c_2|x|^p \leq \phi(x) \leq c_3(1+|x|^p), \quad\forall x\in\Rn
$$
and
$$
	c_2(|x|^p + |u|^p) \leq \ell(x,u) \leq c_3(1+|x|^p+|u|^p), \quad\forall(x,u)\in\Rn\times U
$$
for some finite (independent) constants $c_2,c_3>0$ and $p\geq1$. Further, $\phi(0) = 0$ and $\ell(0,u(0))=0$.

We define a value functional as
\begin{equation}
	v(t,s,x) := \inf_{u_{r}\in\U_{[t+s,t+T]}}~w(t, s, x, u) ~=\inf_{u_{r}\in\U_{[t+s,t+T]}} \E\left[\phi(X^{t+s,x,u}_{T+t}) + \int_{t+s}^{T+t}\ell(X^{t+s,x,u}_{r},u_{r})dr\right] \label{value}
\end{equation}
and denote by $\overline{u}_r(x)$, if it exists, the optimal control, i.e. the admissible control process over the finite horizon $[t,T+t]$ that minimizes (\ref{value}). In (one-step) RHC it is necessary just to compute $\overline{u}_r(x)$ for $r=t$ at which point the cost functional (and thus the value functional) changes to capture the receding horizon.

The value functional is time-invariant with respect to the first argument, in the sense that
\begin{eqnarray}
	{v}(t,s,x) &=& \inf_{u_{r}\in\U_{[t+s,t+T]}} \E\left[\phi(X^{t+s,x,{u}}_{T+t}) + \int_{t+s}^{T+t}\ell(X^{t+s,x,{u}}_{r},{u}_r(x))dr\right] \nonumber \\
		&=&\, \inf_{u_{r}\in\U_{[s,T]}} \E\left[\phi(X^{s,x,{u}}_{T}) \,+\, \int_{s}^{T}\ell(X^{s,x,{u}}_{r},{u}_r(x))dr\right] ~=~ {v}(0,s,x) \nonumber
\end{eqnarray}

Note that when viewed over $s\in[0,T]$ the value function represents the so-called value-to-go over the fixed finite horizon $[t,T+t]$. Going forward, we often write $v(x)$ in place of $v(t,0,x)$ or $v(s,x)$ in place of $v(t,s,x)$ when dealing with the value-to-go function. 

With the modelling hypotheses adopted thus far, we have the following key lemma.

\begin{lemma} \label{lemmavbounds}
There exist a pair of positive constant $c_4, c_5$, depending only on $p, T, c_1,c_2,c_3$, such that
\begin{equation}
\label{lemma1}
	c_4|x|^p \leq v(x)\leq c_5(1+|x|^p),~ \quad \forall \, x \in \Rn 
\end{equation}\vspace{-0.1cm}
and thus $v(x)\rightarrow\infty$ with $|x|\rightarrow\infty$.
\end{lemma}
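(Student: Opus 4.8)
The plan is to establish the two inequalities in \eqref{lemma1} separately, each following from one of the two-sided bounds imposed on $\ell$ and $\phi$.

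\textbf{Lower bound.} For the estimate $c_4|x|^p \leq v(x)$, I would start from the running-cost hypothesis $\ell(x,u)\geq c_2(|x|^p+|u|^p)\geq c_2|x|^p$ and the terminal-cost hypothesis $\phi(x)\geq c_2|x|^p$, so that for \emph{any} admissible control,
$$
w(t,0,x,u)\geq c_2\,\E\left[|X^{t,x,u}_{T+t}|^p + \int_{t}^{T+t}|X^{t,x,u}_r|^p\,dr\right].
$$
It therefore suffices to bound the right-hand side below by a constant (independent of $u$) times $|x|^p$. The natural tool here is a Gronwall-type moment estimate running \emph{backwards} from the initial condition: using the Lipschitz growth of $f,g$, the dynamics \eqref{generalsystemcontrol} give, for $r$ near $t$, that $\E|X^{t,x,u}_r|^p$ cannot have collapsed too far from $|x|^p$ on a short time interval unless a large control effort was expended — but large control effort is itself penalised by the $c_2|u|^p$ term in $\ell$. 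Concretely, I would show there is a constant $\kappa=\kappa(p,c_1)$ with $\E|X^{t,x,u}_r|^p \geq |x|^p e^{-\kappa(r-t)} - \kappa\int_t^r \E|u_s|^p\,ds$ (an elementary Ito/Gronwall computation), integrate over $r\in[t,T+t]$, and combine with the $|u|^p$ contribution already present in $w$ to absorb the control terms; what survives is a positive multiple of $|x|^p$. Taking the infimum over $u$ then yields the lower bound with an explicit $c_4$ depending only on $p,T,c_1,c_2$.

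\textbf{Upper bound.} For $v(x)\leq c_5(1+|x|^p)$, since $v$ is an infimum it is enough to exhibit \emph{one} admissible control whose cost is $O(1+|x|^p)$. The simplest choice is the constant control $u_r\equiv u(0)$ (which is admissible and for which $f(0,u(0))=0$, so the origin remains an equilibrium of the drift). Under this control, a standard forward Gronwall moment bound gives $\E|X^{t,x,u(0)}_r|^p \leq C(1+|x|^p)$ for all $r\in[t,T+t]$, with $C=C(p,T,c_1)$. Plugging this into the upper estimates $\phi(x)\leq c_3(1+|x|^p)$ and $\ell(x,u)\leq c_3(1+|x|^p+|u|^p)$ — noting $|u(0)|^p$ is just another constant absorbed into the "$1$" — yields $w(t,0,x,u(0))\leq c_5(1+|x|^p)$, hence $v(x)\leq c_5(1+|x|^p)$.

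\textbf{Main obstacle.} The upper bound is routine; the real work is the lower bound, and specifically the claim that the process cannot be driven close to the origin "for free." The subtlety is that $v$ involves an infimum over all admissible controls, so one must rule out the possibility that a cheap control rapidly contracts $|X_r|$; this is exactly why the $c_2|u|^p$ penalty in $\ell$ is essential and why the moment lower bound must be stated with the explicit $-\kappa\int \E|u_s|^p\,ds$ correction term so that it can be cancelled against the control cost already being paid. Care is also needed that all constants depend only on the listed parameters $p,T,c_1,c_2,c_3$ and not on $x$ or $u$; this is automatic from the Gronwall constants but should be tracked. Finally, the concluding statement $v(x)\to\infty$ as $|x|\to\infty$ is immediate from $c_4|x|^p\leq v(x)$ with $p\geq 1$.
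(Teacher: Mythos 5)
Your upper bound is essentially the paper's argument (the paper takes the constant control $u\equiv 0$ rather than $u\equiv u(0)$, then uses the standard moment bound $\E|X_t|^p\le c(1+|x|^p)e^{ct}$ and the growth bounds on $\phi,\ell$), so nothing to add there. Your lower bound, however, takes a genuinely different route from the paper. The paper never runs a Gronwall estimate: it bounds the terminal cost below by $c_2|\E[X_T]|^p$ via Jensen, and bounds the running cost below by a constant times $|x-\E[X_T]|^p$ by chaining $\ell(x,u)\ge c_2(|x|^p+|u|^p)\gtrsim |f(x,u)|^p/c_1^p$ (using $|f(x,u)|\le c_1(|x|+|u|)$) with Jensen and the identity $\E[X_T]-x=\E\bigl[\int_0^T f(X_s,u^*_s)\,ds\bigr]$ (the martingale part having zero mean). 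The triangle inequality $|\E[X_T]|+|x-\E[X_T]|\ge|x|$ then closes the argument with explicit constants and no absorption step at all. This is cleaner and sidesteps the issue below.

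The gap in your version is in the absorption step. After integrating your moment estimate $\E|X_r|^p\ge |x|^p e^{-\kappa(r-t)}-\kappa\int_t^r\E|u_s|^p\,ds$ over $r\in[t,T+t]$, the coefficient multiplying $\int\E|u_s|^p\,ds$ is of order $\kappa T$ with $\kappa=\kappa(p,c_1)$ carrying a Gronwall exponential; there is no reason this is $\le c_2$, so the $c_2\int\E|u_s|^p\,ds$ term already present in $w$ does \emph{not} directly cancel it when, say, $c_1T$ is large relative to $c_2$. The estimate is rescuable: since also $w\ge c_2\int\E|u_s|^p\,ds$, you can substitute $\int\E|u_s|^p\,ds\le w/c_2$ into the deficient bound and solve the resulting self-referential inequality for $w$ (equivalently, split into the cases where the control effort is larger or smaller than a fixed multiple of $|x|^p$). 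You should also note that applying It\^o's formula to $|x|^p$ for $1\le p<2$ requires a smoothing argument near the origin, since the paper only assumes $p\ge 1$. With those two repairs your argument goes through and yields constants depending only on $p,T,c_1,c_2$, matching the lemma; as stated, though, the ``absorb the control terms'' step is not justified.
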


\begin{proof}
	The proof of the lemma is given in the appendix.
\end{proof}

Going forward we write $\partial_x v(x)$ and $\partial_{xx}v(x)$ for the gradient vector and Hessian matrix respectively. Furthermore,
$$
	\partial_{s}v(s,x) := \frac{\partial {v}(s,x)}{\partial s} = \frac{\partial}{\partial s} \,\inf_{u_{r}\in\U_{[s,T]}} \E\left[\phi(X^{s,x,{u}}_{T+t}) + \int_{s}^{T}\ell(X^{s,x,{u}}_{r},{u}_r(x))dr\right]
$$
where $\partial_{s}{v}(s,x)$ is defined on $s\in[0,T]$ for any $t\geq0$. We often write $\partial_{s}v(x)$ with $s\in[0, T]$ in place of $\partial_{s}v(s,x)$ for brevity.

Under certain conditions, at any time $s\in[0,T]$ the following Hamilton-Jacobi-Bellman (HJB) equation can be associated with the general value functional
\begin{equation}
	-\partial_{s}v(s,x) = \inf_{u\in\U}~ \left[\ell(x,u) + f(x,u)^\top \partial_x v(s,x) + \tfrac{1}{2}\tr\left[g(x)g(x)^\top\partial_{xx}v(s,x)\right]\right] \nonumber
\end{equation}
\noindent with a terminal boundary condition $v(T,x) = \phi(x)$. This association is in the sense that a suitably smooth solution to the HJB equation, if it exists, coincides with the value-to-go \cite{fleming2006controlled}. Note that in (one-step) RHC we are only interested in the solution of the HJB equation $ v(s,x)$ at time $s=0$ on the interval $s\in[0,T]$.

We note that assumptions introduced in this work are assumed to hold from the point at which they are introduced throughout the remainder of the work. The modelling hypotheses, e.g. on the functions $f$, $g$, $\phi$, $\ell$, etc. are assumed to hold throughout the remainder until the point they are refined (typically specialised) and from which point the refinement is supposed to hold.

\begin{assumption} \label{assump1}
	We assume that the modelling assumptions outlined to this point are augmented (where and how necessary) to ensure $v(s,x):[0,T]\times\Rn\rightarrow[0,\infty)$ is once continuously differentiable in $s\in[0,T]$ and twice continuously differentiable in $x$ for all $x\in\Rn\setminus\{0\}$ and that $v(s,x)$ is a solution to the corresponding associated HJB equation.
\end{assumption}

Sufficient conditions for this assumption to hold (in addition to the modelling hypotheses introduced thus far) are given in, e.g., \cite{fleming2006controlled}. Typically, these sufficient conditions take the form of further boundedness or regularity assumptions on the system and cost functions and/or their (partial) derivatives and are not overly restrictive\footnote{It is noteworthy that while a classical solution to the HJB equation arising in deterministic optimal control is not typical, it is well-known \cite{krylov1972control,fleming2006controlled,krylov2008controlled,touzi2012optimal} that the stochastic optimal control problem is quite generally `more regular'. Indeed, under an assumption of uniform parabolicity, i.e. uniform positive-definiteness of $g(x)g(x)^\top$, it generally follows that a classical (unique) solution to the HJB equation will exist in the stochastic setting (under mild regularity assumptions on the model/cost); see Chapter IV.4 in \cite{fleming2006controlled} or Krylov \cite{krylov2008controlled}. Separately, with an added Lipschitz assumption on the cost (compatible with the hypotheses herein), a classical solution to the HJB equation not only exists but is indeed Lipschitz \cite{yong1999stochastic,buckdahn2012regularity}. This Lipschitz setting is commonly assumed when studying the characteristics of stochastic optimal control; e.g. see \cite{yong1999stochastic}. Note, we do not generally require (or ask) for this Lipschitz property here. In any case, assumptions of this type concerning the existence of a classical solution are common in the analysis of both deterministic \cite{mayne1990receding,jadbabaie2001unconstrained} and stochastic optimal control \cite{yong1999stochastic}.}. We could also move from considering classical solutions of the HJB equation to generalised or viscosity solutions \cite{fleming2006controlled}.

Given the admissible optimal control process $\overline{u}_s(x)$ over the finite horizon $s\in[0,T]$ then the optimal value function over the finite horizon from any $t\geq0$ to $T+t$ is
$$
	v(x) = \E\left[\phi(X^{t,x,\overline{u}}_{T+t}) + \int_{t}^{T+t}\ell(X^{t,x,\overline{u}}_{s},\overline{u}_s(x))ds\right]
$$
and, given Assumption \ref{assump1}, the value-to-go satisfies the following HJB equation
\begin{eqnarray}
	-\partial_{s}v(s,x) = \ell(x,\overline{u}_s(x)) + f(x,\overline{u}_s(x))^\top \partial_x v(s,x)
		+ \tfrac{1}{2}\tr\left[g(x)g(x)^\top\partial_{xx}v(s,x)\right] \label{hjb}
\end{eqnarray}
\noindent on $s\in[0,T]$ with the terminal boundary condition $v(T,x) = \phi(x)$. We will use the following assumption.

\begin{assumption} \label{assump2}
	We assume $\partial_{s}v(x)|_{s=t}\geq0$ at any time $t\geq0$ where $s\in[t,T+t]$.
\end{assumption}

This assumption implies the optimal cost, when viewed at the start of a finite horizon, is increasing with decreasing horizon lengths\footnote{This assumption is common and discussed further in \cite{wei2014stability} and the references therein. It is proven to hold in \cite{magni1997stability} under quite typical modelling constraints (compatible with the modelling hypotheses presented here).}. One way to interpret this is that if the horizon length is reduced then the control action has less time to stabilize the system and thus the terminal cost is likely to be greater even though the running cost might be reduced. 

Finally, we highlight again that in optimal RHC, at any time $t\geq0$, the applied control is just ${u}^*_t(x) = \overline{u}_s(x)|_{s=0}$ and the remaining, finite horizon, controls $\overline{u}_s(x)$ over $0< s\leq T$ are discarded.

\section{Stability of Nonlinear Stochastic Receding Horizon Control and Robustness to Controller Errors}

It has been shown in \cite{wei2014stability} that nonlinear stochastic receding horizon control stabilises the system to the origin if the true optimal control is applied (and under comparable assumptions and hypotheses to those considered here). In this section, we generalise this result to the case in which approximations in computing the optimal control are naturally employed\footnote{In \cite{wei2014stability}, stability of the origin (under the exact controller) is studied under the classical requirement \cite{khasminskii2011stochastic} that $g(0)=0$, i.e. that the origin is an `exact' equilibrium for the diffusion, and thus the noise `goes to zero' at this point. Here, we consider the practical case in which approximations are made when computing the optimal control, and we study stability to some neighbourhood of the origin. Thus, we also do not require $g(0)=0$.}. 

Going forward we write $\B_\delta:=\{x\in\Rn:|x|\leq\delta\}$ for the $\delta>0$ ball around the origin and we use the shorthand $\{v<c\}$ to denote the level set $\{x\in\Rn:v(x) < c\}$ for $c\geq 0$. For any $\delta\geq0$ define $m_\delta:=\inf \{c\geq 0 ~|~ \B_\delta \subseteq \{v<c\}\}$. Informally stated, we study stability to $\{v<m_\delta\}$ where, from (\ref{lemmavbounds}), it follows that $\{v<m_\delta\}$ is contained in a ball around the origin with radius tending to $0$ as $\delta\rightarrow 0$.

Throughout this section we consider a number of general classes of controller approximation error; including deterministic and probabilistic errors and even controller sample and hold errors. We consider the controlled systems's stability robustness in the presence of these errors, and we show that the controller approximation errors do not accumulate (even over an infinite time frame) and the process converges exponentially fast to a small neighbourhood of the origin. These results capture the robustness of the underlying stability of the considered nonlinear stochastic receding horizon controller.

\subsection{Deterministic Control Errors}

We introduce the following control signal $\uu_t(x)\in\mathcal{U}$ with
$$
	|\uu_t({x})-u_t^*({x})|\leq \epsilon,~\quad\forall (x,t)\in\Rn\times[0,\infty)
$$
for some sufficiently small $\epsilon>0$. We denote by $\X^{0,x_{0},\uu}_{t}$ trajectories of (\ref{generalsystemcontrol}) driven by $\uu_t(x)$ with $\X_0=X_0=x_0$. If $\uu_t(x)\rightarrow  u_t^*(x)$ then $\X_t \rightarrow X^{0,x_{0},{u}^*}_{t}$ for all $t\geq0$; i.e. we recover the optimally controlled process in some suitable sense (to be made precise). The goal in this subsection is to prove that if $|\uu_t(x)-  u_t^*(x)|$ is small then $\X_t$ behaves similarly to $X_t$.

We will require the following assumption.

\begin{assumption}
\label{derivatives}
There exists a constant $c_6>0$ such that $|\partial_x v(x)| \leq c_6(1+|x|^p), \, \forall x\in\Rn$.
\end{assumption}

For generality, we have stated our requirement that $|\partial_x v(x)| \leq c_6(1+|x|^p)$ as an assumption. Nevertheless, results of this type, i.e. results concerning estimates/bounds of the derivative of the value function, are well studied\footnote{For example, it is proven in \cite{fleming2006controlled} that $|\partial_x v(x)| \leq c_6(1+|x|^p)$ holds under the given modelling hypotheses adopted in this work (on the cost/dynamics) with essentially the additional assumption $|\partial_{x} \ell(x,u)| \leq c(1+|x|^p + |u|^p)$, $c>0$. Thus, we already have the basic conditions on the dynamics/cost to ensure this assumption holds. See Chapter IV.8 in \cite{fleming2006controlled} and also Chapter 3 and Chapter 4 in \cite{krylov2008controlled}. Even stronger results have been proven \cite{fleming2006controlled,touzi2012optimal} implying this assumption holds trivially when one begins imposing Lipschitz conditions on the cost $\ell(x,u)$. } and conditions on $f$, $g$, $\ell$, and $\phi$ under which this assumption is guaranteed to hold are readily available \cite{fleming2006controlled,krylov2008controlled}.

\begin{theorem} \label{deterministicrobustnesstheorem}
Suppose Assumptions \ref{assump1}, \ref{assump2} and \ref{derivatives}, and the modelling hypotheses hold. Define $\beta:=c_5(1+\frac{1}{\delta^p})$ and $\lambda:=\frac{c_2}{\beta}$. Solutions of the SDE (\ref{generalsystemcontrol}) driven by the optimal control $u^*_t$ satisfy:
\begin{itemize}
\item if $x_0 \in \{v<m_\delta\}$ then, with probability one, $X^{0,x_0,u^*}_t$ will never exit $\{v<m_\delta\}$ and $\E|X^{0,x_0,u^*}_t|^p\leq \frac{m_\delta }{c_4}, ~\forall t\geq 0$;
\item if $x_0\notin \{v<m_\delta\}$, it holds
$$
\E|X^{0,x_0,u^*}_t|^p\leq  \frac{1}{c_4}\left(\beta e^{-\lambda t}|x_0|^p +m_\delta\right), \quad\quad \forall  t\geq 0,
$$
and, with probability one, $X^{0,x_0,u^*}_t$ hits $\{v < m_\delta\}$ in finite time.
\end{itemize}

These two points imply that almost all solutions to (\ref{generalsystemcontrol}) driven by the optimal control $u^*_t$ are exponentially stable to a ball around the origin and almost all trajectories remain within this ball.

Moreover there exists $\overline{\epsilon}>0$ such that if
$$
0<\epsilon <\overline{\epsilon}\quad \mbox{and} \quad |\uu_t({x})-u_t^*({x})|\leq \epsilon, \quad \forall x\in \Rn
$$
then solutions of the SDE (\ref{generalsystemcontrol}) driven by the control law $\uu_t$ satisfy the following:
\begin{itemize}
\item if $x_0 \in \{v<m_\delta\}$, then with probability one, $\X^{0,x_0,\uu}_t$ never exits $\{v<m_\delta\}$ and $\E|\X^{0,x_0,\uu}_t|^p\leq \frac{m_\delta }{c_4}, ~\forall t$;
\item if $x_0\notin\{v<m_\delta\}$, there exists a constant $\lambda\geq\theta_\epsilon>0$ such that
$$
\E|\X^{0,x_0,\uu}_t|^p\leq  \frac{1}{c_4}\left(\beta e^{-\theta_\epsilon t}|x_0|^p +m_\delta\right),  \quad\quad \forall t\geq 0,
$$
and the constant $\theta_\epsilon$ satisfies
$
\lim_{\epsilon\rightarrow 0} \theta_\epsilon=\lambda.
$
Further, with probability one, $\X^{0,x_0,\uu}_t$ hits $\{v < m_\delta\}$ in finite time.
\end{itemize}

These two points imply that almost all solutions to (\ref{generalsystemcontrol}) driven by the approximate control $\uu_t$ are exponentially stable to a ball around the origin and almost all trajectories remain within this ball. Such solutions converge exponentially fast under $\uu_t$ but slower than under $u^*_t$. In the limit $\epsilon\rightarrow0$ we recover the stability properties of the optimal controller.

\end{theorem}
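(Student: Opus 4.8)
The plan is to use the value-to-go $v$ itself as a stochastic Lyapunov function. Write $\mathcal{L}^{u}\psi(x):=f(x,u)^{\top}\partial_x\psi(x)+\tfrac12\tr[g(x)g(x)^{\top}\partial_{xx}\psi(x)]$ for the diffusion generator of (\ref{generalsystemcontrol}) under a frozen control value $u$. Evaluating the HJB equation (\ref{hjb}) at $s=0$ and using Assumption~\ref{assump2} together with $\ell\ge0$ gives, for the closed-loop optimal control $u^*_t(x)=\overline u_s(x)|_{s=0}$,
\[
\mathcal{L}^{u^*_t(x)}v(x)=-\partial_s v(x)\big|_{s=0}-\ell(x,u^*_t(x))\le-\ell(x,u^*_t(x))\le-c_2|x|^p .
\]
Converting this to a sublevel-set statement: if $|x|\ge\delta$ then $1\le|x|^p/\delta^p$, so Lemma~\ref{lemmavbounds} yields $v(x)\le c_5(1+1/\delta^p)|x|^p=\beta|x|^p$, hence $c_2|x|^p\ge\lambda v(x)$ and therefore $\mathcal{L}^{u^*_t(x)}v(x)\le-\lambda v(x)$ on $\Rn\setminus\B_\delta$. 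This differential inequality, together with the two-sided polynomial bounds of Lemma~\ref{lemmavbounds}, drives everything.

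For $u^*$ I would apply Dynkin's formula to $v$ along the closed-loop trajectory $X_t$. If $x_0\notin\{v<m_\delta\}$ then $|x_0|\ge\delta$ and $v(x_0)\le\beta|x_0|^p$; let $\tau$ be the first hitting time of $\{v<m_\delta\}$, so on $[0,\tau)$ the trajectory stays outside $\B_\delta$ and $e^{\lambda(t\wedge\tau)}v(X_{t\wedge\tau})$ is a supermartingale with $\E[e^{\lambda(t\wedge\tau)}v(X_{t\wedge\tau})]\le v(x_0)\le\beta|x_0|^p$. Splitting the expectation on $\{t<\tau\}$ versus $\{t\ge\tau\}$, using $v(X_t)\ge m_\delta$ on $\{t<\tau\}$ and the first bullet (via the strong Markov property) to control the trajectory after $\tau$, gives $\E v(X_t)\le\beta e^{-\lambda t}|x_0|^p+m_\delta$, whence $\E|X_t|^p\le\E v(X_t)/c_4$ by Lemma~\ref{lemmavbounds}; the bound $m_\delta\,\PP(t<\tau)\le\beta e^{-\lambda t}|x_0|^p\to0$ gives $\tau<\infty$ almost surely. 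If $x_0\in\{v<m_\delta\}$, the global inequality $\mathcal{L}^{u^*}v\le-c_2|x|^p\le0$ makes $v$ stopped at the exit time of $\{v<m_\delta\}$ a bounded supermartingale, and combined with the strict decrease $\mathcal{L}^{u^*}v\le-\lambda v<0$ on the buffer annulus $\{v<m_\delta\}\setminus\B_\delta$ this forces the exit time to be $+\infty$ a.s., after which $v(X_t)<m_\delta$ yields $\E|X_t|^p\le m_\delta/c_4$.

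For the approximate controller I would exploit that $g$ is control-free, so the second-order part of the generator is unaffected and
\[
\mathcal{L}^{\uu_t(x)}v(x)-\mathcal{L}^{u^*_t(x)}v(x)=\big(f(x,\uu_t(x))-f(x,u^*_t(x))\big)^{\top}\partial_x v(x).
\]
The Lipschitz-in-$u$ modelling hypothesis bounds $|f(x,\uu_t(x))-f(x,u^*_t(x))|\le c_1\epsilon$, and Assumption~\ref{derivatives} bounds $|\partial_x v(x)|\le c_6(1+|x|^p)$, so $\mathcal{L}^{\uu_t(x)}v(x)\le-c_2|x|^p+c_1c_6\epsilon(1+|x|^p)$. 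On $\Rn\setminus\B_\delta$, absorbing the additive constant via $1\le|x|^p/\delta^p$ and then using $v(x)\le\beta|x|^p$ gives $\mathcal{L}^{\uu_t(x)}v(x)\le-\theta_\epsilon v(x)$ with $\theta_\epsilon=\lambda-\tfrac{c_1c_6}{c_5}\epsilon$; choosing $\overline\epsilon:=c_5\lambda/(c_1c_6)$ makes $\theta_\epsilon\in(0,\lambda]$ whenever $0<\epsilon<\overline\epsilon$ and $\theta_\epsilon\to\lambda$ as $\epsilon\to0$. With this inequality the two bullets for $\X_t$ under $\uu$ are obtained verbatim from the $u^*$ argument, with $\lambda$ replaced by $\theta_\epsilon$; in particular the buffer-annulus argument for invariance of $\{v<m_\delta\}$ still applies since $\mathcal{L}^{\uu}v\le-\theta_\epsilon v<0$ there. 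The generator algebra and the Lemma~\ref{lemmavbounds} sandwiching are routine; the steps requiring care are (i) justifying Dynkin's formula and the true-martingale property of the stochastic integrals, which I would handle by localising on exits of large balls and entries to a small ball around the origin and then invoking the polynomial bounds of Lemma~\ref{lemmavbounds} and Assumption~\ref{derivatives} with standard $p$-th moment estimates for (\ref{generalsystemcontrol}); (ii) the limited regularity of $v$ at the origin allowed by Assumption~\ref{assump1}, which the same localisation and the lower bound $v(x)\ge c_4|x|^p$ render harmless; and (iii) the almost-sure forward-invariance of $\{v<m_\delta\}$, which I expect to be the main obstacle and which I would establish from the stopped-supermartingale property of $v$ together with the strict decrease of $v$ on $\{v<m_\delta\}\setminus\B_\delta$.
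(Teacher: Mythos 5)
Your proposal is correct and follows essentially the same route as the paper: $v$ as a stochastic Lyapunov function, the generator bound $\LL v\le -c_2|x|^p$ from the HJB equation and Assumption~\ref{assump2}, the perturbation term controlled via the Lipschitz-in-$u$ hypothesis and Assumption~\ref{derivatives}, the conversion $v\le\beta|x|^p$ off $\B_\delta$, and an exponentially weighted supermartingale with stopping times — even your $\theta_\epsilon=\lambda-\epsilon c_1c_6/c_5$ and $\overline\epsilon$ coincide with the paper's $\alpha_\epsilon/\beta$. The only cosmetic deviation is the forward-invariance of $\{v<m_\delta\}$, where the paper uses an exit/re-entry pair of stopping times and optional sampling in place of your stopped-supermartingale-plus-buffer-annulus argument; both hinge on the same strict negativity of the generator away from the origin.
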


\begin{proof}
Going forward, we use the shorthand $\X_t$ for the process (\ref{generalsystemcontrol}) driven by $\uu_t(\X_t)$. We prove only the second half of the theorem concerning the approximate controller $\uu_t$. Statements on the optimal control follow with $\epsilon=0$.

Under the hypotheses of the theorem it follows that
$
	c_4|x|^p\leq v(x) \leq c_5(1+|x|^p)
$
for some $c_2,c_4,c_5>0$. Let $\LL$ denote the infinitesimal generator \cite{khasminskii2011stochastic} of $X^{0,x_{0},{u}^*}_{t}$. Then
\begin{equation}
	\LL v :=  f(x,{u}^*_t(x))^\top \partial_x v + \tfrac{1}{2}\tr\left[g(x)g(x)^\top\partial_{xx}v\right] \nonumber
\end{equation}
is a function of $x\in\Rn$ derived by applying the infinitesimal generator to $v(x)$. From (\ref{hjb}), we have $\LL v = -\ell(x,{u}^*_t(x))-\partial_{s}v(s,x)|_{s=t}$ which by the modelling hypotheses and Assumption \ref{assump2} is strictly negative definite $\LL v< -c_2|x|^p$ for all $x\in\Rn\setminus\{0\}$ and $\LL v =0$ at $x=0$. 

Now, Ito's formula yields
$$
	dv(\X_t )~=~ \langle f(\X_t , \uu_t (\X_t )), \partial_x v(\X_t) \rangle dt +\tfrac{1}{2} \tr [ g(\X_t)g(\X_t)^\top \partial_{xx}v(\X_t)] dt + \langle g(\X_t)dW_t, \partial_x v(\X_t )\rangle
$$
Adding and subtracting $\langle f(\X_t , u_t^*(\X_t )), \partial_x  v(\X_t) \rangle$, and using the HJB equation (\ref{hjb}), we obtain
\begin{eqnarray}
	dv(\X_t) ~=~ \LL v(\X_t) dt + \langle  f(\X_t , \uu_t(\X_t)) -f(\X_t , u^*_t(\X_t )), \partial_x  v(\X_t) \rangle dt + \langle g(\X_t)dW_t, \partial_x v(\X_t ) \rangle \nonumber
\end{eqnarray}
Set
$$
	\widehat{\LL v}(x) :=\LL v(x) + \langle f(x , \uu_t(x ))-f(x ,u^*_t(x )), \partial_x v(x) \rangle
$$
where $\widehat{\LL v}(x)$ is the infinitesimal generator of $\X_t$ applied to $v(x)$, for any $x\in\Rn$.

Recall that the infinitesimal generator is a purely local construction \cite{khasminskii2011stochastic} which allows us to consider $\widehat{\LL v}(x)$ and $\LL v(x)< -c_2|x|^p$ at the same point in space-time. 

By using the Lipschitz condition on $f(x,u)$ and the Cauchy-Schwartz inequality we obtain
\begin{eqnarray}
	\label{LV} \widehat{\LL v}(x) & \leq& \LL v(x) + c_1|\partial_x v| |\uu_t(x)-u_t^{*}(x)| \nonumber \\
	&\leq& -c_2|x|^p + \epsilon c_1 |\partial_x v|  \nonumber \\
	& \leq& -c_2|x|^p+\epsilon c_1c_6(1+|x|^p) ~=~ (-c_2+\epsilon c_1c_6)|x|^p+\epsilon c_1 c_6 \nonumber
\end{eqnarray}

We define
$$
\alpha_\epsilon:= c_2-(\frac{1}{\delta^p}+1)\epsilon c_1c_6 \qquad \mbox{and} \qquad \theta_\epsilon:=\frac{\alpha_\epsilon}{\beta}
$$
There exists $\overline{\epsilon}>0$ small enough so $\alpha_\epsilon>0,~\forall \epsilon < \overline{\epsilon}$. Moreover we see that $\lim_{\epsilon \rightarrow 0} \alpha_\epsilon=c_2 \Rightarrow \lim_{\epsilon \rightarrow 0} \theta_\epsilon=\lambda$. Going forward we write $\alpha=\alpha_\epsilon$ for simplicity. It is easy to check that on $\{x\in\Rn:|x|>\delta\}$ we have
\begin{equation}
\label{Vestimate}
	v(x)\leq \beta|x|^p ~\quad\mathrm{and}\quad~ \widehat{\LL v}(x)\leq -\alpha|x|^p
\end{equation}
We define $V(t,x) := v(x) e^{\frac{\alpha t}{\beta}}$. Then, on the set $\{x\in\Rn:|x|>\delta\}$, it follows
\begin{equation}
\label{LVestimate}
	\widehat{\LL V}(t,x) ~=~ e^{\frac{\alpha t}{\beta}} \left( \frac{\alpha}{\beta}v(x) +\widehat{\LL v}(x) \right) ~\leq~ 0
\end{equation}
where $\widehat{\LL V}(t,x)$ is the infinitesimal generator of $\X_t$ applied to $V(t,x)$, for any $x\in\Rn$ at $t\geq0$.

Assume now $x_0\in \{v<m_\delta\}$. Given $t\geq0$, define the stopping times
$$
	\tau_1:=\inf\{ s\geq 0~|~ \X_s \notin \{v<m_\delta\}\} \wedge t \quad \mbox{and} \quad \tau_2:=\inf\{ s\geq \tau_1 ~|~ \X_s \in \{v<m_\delta\}\}\wedge t
$$
i.e. $\tau_1, \tau_2$ are, respectively, the first exit and re-entry time of the process $\X_t$ in $\{v<m_\delta\}$ before $t$. Going forward we write $V(\X_t)$ in place of $V(t,\X_t)$ for simplicity. By definition, for any $t_2\geq t_1$, we have
$$
		V(\X_{t_2})-V(\X_{t_1})=\int_{t_1}^{t_2} \widehat{\LL V}(\X_s)ds+\int_{t_1}^{t_2} e^{\frac{\alpha}{\beta}s} \partial_xv(\X_s)^\top g(\X_s)   dW_s.
$$
We know $\int_{t_1}^{t_2} e^{\frac{\alpha}{\beta}s} \partial_xv(\X_s)^\top g(\X_s)  dW_s$ is a martingale. Then, the optional sampling theorem \cite{doob1953stochastic} implies
$$
	\E\left[V(\X_{\tau_2})-V(\X_{\tau_1})\right]=\E\left[\int_{\tau_1}^{\tau_2} \widehat{\LL V}(\X_s)ds \right] \leq 0
$$
where the last inequality follows from (\ref{LVestimate}). We also note that, by definition, $V(\X_{\tau_2})\geq V(\X_{\tau_1})$. Therefore we have $V(\X_{\tau_2})= V(\X_{\tau_1})$ almost surely and consequently $\tau_1=\tau_2$ almost surely. Thus, if the process starts in the set $\{v<m_\delta\}$ it can never exit this set. It follows that
$$
	\E|\X_t|^p ~\leq~ \frac{1}{c_4}\E\left[v(\X_t)\right] ~\leq~ \frac{m_\delta }{c_4}
$$
and proof of the first point is complete.

Now assume $x_0\notin \{v<m_\delta\}$, fix $t$ and define the following stopping time
$$
	\tau:=\inf\{ s\geq 0~|~ \X_s \in \{v<m_\delta\}\} \wedge t
$$
Note that we have already considered the case $t\geq \tau \Rightarrow \X_t\in \{v<m_\delta\}$. Now write
$$
	V(\X_t)=v(x_0)+V(\X_\tau) - v(x_0) + V(\X_t) - V(\X_\tau)
$$
and take the expectation of both sides. Arguing as before $\E[V(\X_\tau) - v(x_0)]\leq 0$. Moreover
\begin{eqnarray}
	\E\left[V(\X_t) - V(\X_\tau)\right] ~=~ \E\left[e^{\frac{\alpha}{\beta}t}v(\X_t) -e^{\frac{\alpha}{\beta}\tau}v(\X_\tau)\right] ~\leq~ m_\delta\E\left[e^{\frac{\alpha}{\beta}t}-e^{\frac{\alpha}{\beta}\tau}\right] ~\leq~ m_\delta e^{\frac{\alpha}{\beta}t} \nonumber
\end{eqnarray}
Hence, using the two inequalities just shown, (\ref{Vestimate}) and (\ref{lemma1}) we have
\begin{eqnarray}
	\E|\X_t|^p &\leq& \frac{1}{c_4}\E\left[v(X_t)\right] ~=~ \frac{e^{-\frac{\alpha}{\beta}t}}{c_4}\E\left[V(\X_t)\right] ~\leq~ \frac{e^{-\frac{\alpha}{\beta}t}}{c_4} (v(x_0)+ m_\delta e^{\frac{\alpha}{\beta}t}) ~\leq~ \frac{\beta}{c_4} |x_0|^p e^{-\frac{\alpha}{\beta}t} + \frac{m_\delta}{c_4} \nonumber
\end{eqnarray}
and proof of so-called exponential p-stability is complete.

Now, we have already shown $ \E\left[V(\X_{\tau})-V(x_0)\right]=\E\left[\int_0^{\tau} \LL V(\X_s)ds \right] \leq 0$ which implies $\E\left[ v(\X_{\tau})e^{\gamma \tau} \right] \leq v(x_0)$. Further,
$$ 
\E\left[ v(\X_{\tau})e^{\gamma \tau }\right] ~=~\E \left[ v(\X_{\tau})e^{\gamma \tau} \mathbbm{1}_{\{\tau \neq t\}} + v(\X_{\tau})e^{\gamma \tau} \mathbbm{1}_{\{\tau =  t\}} \right] ~\geq~ \E\left[v(\X_{\tau})e^{\gamma \tau} \mathbbm{1}_{\{\tau =  t\}} \right] ~>~ m_\delta e^{\gamma t} \mathbb{P} (\tau = t)
$$
and thus $m_\delta e^{\gamma t} \mathbb{P} (\tau =t) < v(x_0)$ for all $t$. This implies $\mathbb{P} (\tau = t) \rightarrow 0$ as $t\rightarrow \infty$ which in turn implies that $ \mathbb{P} (\tau<\infty)=1$. From this and inequality (\ref{LVestimate}) it follows that $V(\tau, \X_{\tau})$ is a positive supermartingale. From Theorem 5.1 in \cite{khasminskii2011stochastic} it follows that $V(\tau, \X_{\tau})$ converges almost surely to a finite limit (dependent on $x_0$) as $t\rightarrow \infty$. Then, from (\ref{Vestimate}) we have
$$
|\X_{\tau}|^p \leq \frac{(\sup_t V(\tau, \X_{\tau}))}{c_4} e^{-\frac{\alpha}{\beta} \tau}
$$
with probability one. Letting $t\rightarrow \infty$ proves that almost all solutions converge exponentially fast toward $\{v < m_\delta\}$. Results of this type are known \cite{khasminskii2011stochastic}, i.e. where $p$-th moment exponential stability implies almost sure exponential stability.
\end{proof}

\subsection{Probabilistic Control Errors}

We now turn our attention to the case where the perturbed controller has a Gaussian distribution,
$$
	\uu_t(x) \sim \mathcal{N}(u_t^*(x),\Sigma(x))
$$
and the evolution of the nonlinear controlled process $X^{0,x_{0},u}_{t}(\omega):[0,\infty)\times\Omega\to\Rn$ follows
\begin{equation}
	dX^{0,x_{0},u}_{t} = f(X^{0,x_{0},u}_t)dt + h(X^{0,x_{0},u}_t)u_tdt + g(X^{0,x_{0},u}_t)dW_{t} \label{gaussianperturbedsystemcontrol}
\end{equation}
with the existing modelling hypotheses holding. Here $h:\Rn\to\Rnm$ is continuous with
$$
	|f(x)-f(y)|+|g(x)-g(y)| + |h(x)-h(y)| \leq c_1|x-y|, \quad\forall(x,y)\in\Rn\times\Rn
$$
for some finite constant $c_1>0$. We denote by $\X^{0,x_{0},\uu}_{t}$ trajectories of (\ref{gaussianperturbedsystemcontrol}) driven by $\uu_t(x)$ with $\X_0=X_0=x_0$.

In this subsection we seek a result analogous to Theorem \ref{deterministicrobustnesstheorem} under the proposed probabilistic controller error model. The goal is to show that if $\Sigma(x)\rightarrow0$ for all $x\in\Rn$ then $\uu_t(x)\rightarrow  u_t^*(x)$ and $\X_t \rightarrow X^{0,x_{0},{u}^*}_{t}$ for all $t\geq0$; i.e. we recover the optimally controlled process in some suitable sense. 

As before we need a further assumption on the derivatives of the value function.

\begin{assumption}
\label{derivatives2}
One of the two following condition holds:
\begin{itemize}
	\item there exists a constant $c_7>0$ such that $|\partial_{xx} v(x)| \leq c_7(1+|x|^{p-2}), \, \forall x\in\Rn$;
	\item there exists a constant $c_7>0$ such that $|\partial_{xx} v(x)| \leq c_7(1+|x|^p), \, \forall x\in\Rn$ and $h(x)$ is bounded.
\end{itemize}
\end{assumption}

As with Assumption \ref{derivatives}, we have stated our requirement that $|\partial_{xx} v(x)| \leq c_7(1+|x|^p)$ as an assumption (for the sake of generality). Yet similarly again, results of this type, i.e. results concerning estimates/bounds of the second derivative of the value function, are well studied in the literature\footnote{As with Assumption \ref{derivatives} it is proven in \cite{fleming2006controlled} that $|\partial_{xx} v(x)| \leq c_7(1+|x|^p)$ holds under the modelling hypotheses adopted in this work (on the cost/dynamics), with essentially the additional assumption that $|\partial_{x} \ell(x,u)| \leq c(1+|x|^p + |u|^p)$ and $|\partial_{xx} \ell(x,u)| \leq c(1+|x|^p + |u|^p)$, $c>0$. See Chapter IV.9 in \cite{fleming2006controlled} and also Chapter 3 and Chapter 4 in \cite{krylov2008controlled}. Again, stronger results have been proven \cite{fleming2006controlled,touzi2012optimal} implying this assumption holds trivially when one imposes Lipschitz conditions on the cost $\ell(x,u)$, which is common in similar analysis \cite{yong1999stochastic}.} \cite{fleming2006controlled,krylov2008controlled}.

The following is the main result of this subsection.

\begin{theorem} \label{probabilisticrobustnesstheorem}
Suppose Assumption \ref{assump1}, \ref{assump2} and \ref{derivatives2} and the relevant modelling hypotheses hold. Define $\beta:=c_5(1+\frac{1}{\delta^p})$ and $\lambda:=\frac{c_2}{\beta}$. Solutions of (\ref{gaussianperturbedsystemcontrol}) driven by the optimal control $u^*_t$ satisfy the relevant convergence results in Theorem \ref{deterministicrobustnesstheorem}.

 Moreover, there exists $\overline{\epsilon}>0$ such that if $~0<\epsilon <\overline{\epsilon}$ and the following holds $\forall x\in\Rn$
\begin{itemize}
\item $\uu_t(x) \sim \mathcal{N}(u_t^*(x),\Sigma(x))$;
\item $0<\Sigma(x) = \Sigma(x)^\top$~and~for~any~norm~$|\Sigma(x)|\leq \epsilon$
\end{itemize}
then solutions of the SDE (\ref{gaussianperturbedsystemcontrol}) driven by the approximated controller $\uu_t$ satisfy the following:
\begin{itemize}
\item if $x_0 \in \{v<m_\delta\}$, then with probability one, $\X^{0,x_0,\uu}_t$ never exits $\{v<m_\delta\}$ and $\E|\X^{0,x_0,\uu}_t|^p\leq \frac{m_\delta }{c_4},~\forall t$;
\item if $x_0\notin\{v<m_\delta\}$, there exists a constant $\lambda\geq\theta_\epsilon>0$ such that
$$
	\E|\X^{0,x_0,\uu}(t)|^p\leq  \frac{1}{c_4}\left(\beta e^{-\theta_\epsilon t}|x_0|^p +m_\delta\right),  \quad\quad \forall t\geq 0,
$$
and $\theta_\epsilon$ obeys $\lim_{\epsilon\rightarrow 0} \theta_\epsilon=\lambda$ (where $\lambda$ is the convergence rate of the optimal control; see Theorem \ref{deterministicrobustnesstheorem}). Further, with probability one, $\X^{0,x_0,\uu}_t$ hits $\{v < m_\delta\}$ in finite time.
\end{itemize}

Thus, almost all solutions to (\ref{gaussianperturbedsystemcontrol}) driven by the approximate control $\uu_t$ are exponentially stable to a ball around the origin and almost all trajectories remain within this ball. Such solutions converge exponentially fast under $\uu_t$ but slower than under $u^*_t$. As $\epsilon\rightarrow0$ we recover the stability properties of the optimal controller.
\end{theorem}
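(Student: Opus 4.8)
The plan is to reduce everything to the template already executed for Theorem~\ref{deterministicrobustnesstheorem}. Once I have an estimate of the form $\widehat{\LL v}(x)\le -\alpha_\epsilon|x|^p$ (for a suitable $\alpha_\epsilon>0$) on the exterior set $\{x\in\Rn:|x|>\delta\}$, together with $v(x)\le\beta|x|^p$ there, the Lyapunov function $V(t,x):=v(x)e^{\alpha_\epsilon t/\beta}$, the stopping-time/optional-sampling arguments (on the exit and re-entry times of $\{v<m_\delta\}$), the decomposition $V(\X_t)=v(x_0)+[V(\X_\tau)-v(x_0)]+[V(\X_t)-V(\X_\tau)]$ for $x_0\notin\{v<m_\delta\}$, the $\PP(\tau=t)\to 0$ argument for the finite hitting time, and the positive-supermartingale convergence step (Theorem~5.1 in \cite{khasminskii2011stochastic}) all carry over verbatim. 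The statements for the optimal controller $u^*_t$ follow by setting $\epsilon=0$ (the HJB identity $\LL v=-\ell(x,u^*_t(x))-\partial_s v|_{s=t}<-c_2|x|^p$ remains valid for the affine-in-control drift of (\ref{gaussianperturbedsystemcontrol})). So the only genuinely new work is to produce the exterior estimate on $\widehat{\LL v}$ under the Gaussian error model.

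For that, I would first identify the infinitesimal generator of $\X_t$. Writing the injected control as $\uu_t(x)=u_t^*(x)+\xi(x)$ with $\xi(x)\sim\mathcal{N}(0,\Sigma(x))$, the perturbed dynamics (\ref{gaussianperturbedsystemcontrol}) amount to the SDE $dX_t=[f(X_t)+h(X_t)u_t^*(X_t)]\,dt+g(X_t)\,dW_t+h(X_t)\Sigma(X_t)^{1/2}\,dB_t$ for an auxiliary Wiener process $B_t$ independent of $W_t$; because the control error is mean-zero it contributes nothing to the drift at first order, while Itô's formula and the HJB equation (rewritten for the drift $f(x)+h(x)u$) give
$$\widehat{\LL v}(x)=\LL v(x)+\tfrac{1}{2}\tr\bigl[h(x)\Sigma(x)h(x)^\top\partial_{xx}v(x)\bigr],\qquad \LL v(x)=-\ell(x,u^*_t(x))-\partial_s v(s,x)|_{s=t}<-c_2|x|^p$$
for $x\neq 0$, exactly as in the deterministic proof but with this extra second-order term in place of the first-order term $\langle f(x,\uu_t)-f(x,u^*_t),\partial_x v\rangle$.

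The key step is then to show the extra term is $O(\epsilon(1+|x|^p))$. Using $|\tr[h\Sigma h^\top\partial_{xx}v]|\le C\,|h(x)|^2\,|\Sigma(x)|\,|\partial_{xx}v(x)|\le C\epsilon\,|h(x)|^2\,|\partial_{xx}v(x)|$ (a fixed dimensional constant $C$, with norm equivalence absorbing the ``for any norm'' clause) and the Lipschitz bound $|h(x)|\le|h(0)|+c_1|x|$, the two alternatives of Assumption~\ref{derivatives2} are precisely what is needed: under the first, $|h(x)|^2|\partial_{xx}v(x)|\le C(1+|x|^2)\,c_7(1+|x|^{p-2})\le C'(1+|x|^p)$; under the second, boundedness of $h$ gives $|h(x)|^2|\partial_{xx}v(x)|\le C'c_7(1+|x|^p)$. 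Either way $\widehat{\LL v}(x)\le(-c_2+\epsilon C')|x|^p+\epsilon C'$, the same form obtained in Theorem~\ref{deterministicrobustnesstheorem} with $c_1c_6$ replaced by $C'$. Setting $\alpha_\epsilon:=c_2-(1+\tfrac{1}{\delta^p})\epsilon C'$ and $\theta_\epsilon:=\alpha_\epsilon/\beta$, choosing $\overline{\epsilon}>0$ so that $\alpha_\epsilon>0$ for $\epsilon<\overline{\epsilon}$, and noting $\alpha_\epsilon\to c_2$ hence $\theta_\epsilon\to\lambda$ as $\epsilon\to0$, I recover $v(x)\le\beta|x|^p$ and $\widehat{\LL v}(x)\le-\alpha_\epsilon|x|^p$ on $\{|x|>\delta\}$; from here the argument is identical to the proof of Theorem~\ref{deterministicrobustnesstheorem}.

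The main obstacle is conceptual rather than computational: correctly pinning down how a centered Gaussian control perturbation enters the generator --- namely that it induces no first-order drift correction but an added diffusion term of infinitesimal covariance $h\Sigma h^\top$, contributing $\tfrac{1}{2}\tr[h\Sigma h^\top\partial_{xx}v]$ --- and then absorbing this term into the $-c_2|x|^p$ margin, which is exactly why the hypothesis carries two variants of the Hessian growth bound (the sharper $(1+|x|^{p-2})$ bound matched with the generic quadratic growth of a Lipschitz $h$, or the coarser $(1+|x|^p)$ bound compensated by boundedness of $h$). Once this estimate is in hand, no new stochastic-stability ideas are required.
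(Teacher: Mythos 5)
Your proposal is correct and follows essentially the same route as the paper: interpret the centered Gaussian control error as an added diffusion $h(x)\Sigma(x)^{1/2}dY_t$ driven by an auxiliary Wiener process, so that the generator picks up the extra second-order term $\tfrac{1}{2}\tr[h\Sigma h^\top\partial_{xx}v]$, bound that term by $\epsilon c(1+|x|^p)$ via Assumption~\ref{derivatives2}, and then rerun the Lyapunov/stopping-time machinery of Theorem~\ref{deterministicrobustnesstheorem} with $\alpha_\epsilon:=c_2-(1+\tfrac{1}{\delta^p})\epsilon c$. Your spelled-out case analysis of the two alternatives in Assumption~\ref{derivatives2} (Lipschitz $h$ paired with the $|x|^{p-2}$ Hessian bound versus bounded $h$ paired with the $|x|^{p}$ bound) is more explicit than the paper's one-line invocation, and your form $\tr[h\Sigma h^\top\partial_{xx}v]$ of the correction term is the dimensionally consistent one.
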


\begin{proof}
As before, denote by $\X_t$ the process (\ref{gaussianperturbedsystemcontrol}) driven by the approximated control $\uu_t(\X_t)$. We quickly find
$$
	d\X_t =  f(\X_t)dt + h(\X_t)u^*_tdt + h(\X_t)(\uu_t-u^*_t)dt + g(\X_t)dW_t
$$
Since $\Sigma(x)$ is (symmetric) positive-definite we have $\Sigma^{1/2}\Sigma^{1/2}=\Sigma(x)$ where $\Sigma^{1/2}$ exists and is unique. Then $\uu_t(x) \sim \mathcal{N}(u_t^*(x),\Sigma(x))$ implies $(\uu_t-u^*_t)dt=\Sigma^{1/2} dY_t$ where $Y_t$ is a standard Brownian motion \cite{arnold1974stochastic}. The two Brownian motions $Y_t$ and $W_t$ are realised on two different spaces: we have already fixed $\Omega$ and we denote by $\Omega'$ the space associated with the probabilistic controller approximation such that $[W_t^\top, Y_t^\top]^\top$ defines a fixed Brownian motion on $\Omega\times \Omega'$.

Let $v(x)=\E [\phi (X_T)+\int_0^T \ell(X_s,u^*_s)ds]$ where the process $X_t$ defining $v(x)$ is defined by (\ref{gaussianperturbedsystemcontrol}) driven with the optimal control $u^*_t(x)$. We consider
$$
	\widehat{\LL v} = \langle \,  f(x) + h(x) u^*_t, \partial _x v \, \rangle + \frac{1}{2} \tr [ g(x)g(x)^\top \partial_{xx} v] + \frac{1}{2} \tr [\Sigma(x) h(x)h(x)^\top \partial_{xx} v]
$$
where $\widehat{\LL v}(x)$ is the infinitesimal generator of $\X_t$ applied to $v(x)$, for any $x\in\Rn$ at $t\geq0$. Again, $\widehat{\LL v}(x)$ should be viewed as a function of $x\in\Rn$.

We know that
$$
	\langle \,  f(x) + h(x) u^*_t, \partial _x v \, \rangle + \frac{1}{2} \tr [ g(x)g(x)^\top \partial_{xx} v]  < -c_2|x|^p
$$
from the proof of Theorem \ref{deterministicrobustnesstheorem}, i.e. $\LL v(x)< -c_2|x|^p$. Owing to Assumption \ref{derivatives2} we have, for some positive constant $c$,
$$
	\frac{1}{2} \tr [\Sigma(x) h(x)h(x)^\top \partial_{xx} v] \leq \epsilon c (|x|^p+1)
$$
and therefore,
$$
	\widehat{\LL v}(x) ~\leq~ -c_2|x|^p + \epsilon c |x|^p+\epsilon c
$$
Define $\alpha_\epsilon:= c_2-(\frac{1}{\delta^p}+1)\epsilon c$ and the proof now follows exactly that of Theorem \ref{deterministicrobustnesstheorem} and we omit the repetition for brevity.
\end{proof}

\subsection{Mixed Type Errors and Sampled Control}

We now state a simple corollary that takes into account a mixed probabilistic and deterministic controller error.

\begin{corollary}
\label{mixederror}
Suppose we are working under (\ref{gaussianperturbedsystemcontrol}) and Assumptions \ref{assump1} to \ref{derivatives2} and the modelling hypotheses outlined thus far hold. Define $\beta:=c_5(1+\frac{1}{\delta^p})$ and $\lambda:=\frac{c_2}{\beta}$. There exist $\overline{\epsilon_1}, \overline{\epsilon_2} >0$ such that if $0<\epsilon_1<\overline{\epsilon_1}$ and $0<\epsilon_2<\overline{\epsilon_2}$ and 
\begin{itemize}
\item	$(\uu_t(x) - u_t^* (x)) \sim \mathcal{N}(\mu(x),\Sigma(x))$,
\item $|\mu(x)|\leq\epsilon_1$,
\item $0<\Sigma(x) = \Sigma(x)^\top$~and~for~any~norm~$|\Sigma(x)|\leq \epsilon_2$,
\end{itemize}
holds $\forall x \in \Rn$, then solutions of the SDE (\ref{gaussianperturbedsystemcontrol}) driven by the approximate control law $\uu_t$ satisfy:\begin{itemize}
\item if $x_0 \in \{v<m_\delta\}$, then with probability one, $\X^{0,x_0,\uu}_t$ never exits $\{v<m_\delta\}$ and $\E|\X^{0,x_0,\uu}_t|^p\leq \frac{m_\delta }{c_4},~\forall t\geq 0$;
\item if $x_0\notin\{v<m_\delta\}$, put $\epsilon=(\epsilon_1,\epsilon_2)$. There exists a constant $\lambda\geq\theta_\epsilon>0$ such that
$$
	\E|\X^{0,x_0,\uu}_t|^p\leq  \frac{1}{c_4}\left(\beta e^{-\theta_\epsilon t}|x_0|^p +m_\delta\right),   \quad\quad \forall ~ t\geq 0,
$$
and $\theta_\epsilon$ obeys $\lim_{\epsilon\rightarrow 0} \theta_\epsilon=\lambda$ (where $\lambda$ is the convergence rate of the optimal control; see Theorem \ref{deterministicrobustnesstheorem}). Also, with probability one, $\X^{0,x_0,\uu}_t$ hits $\{v < m_\delta\}$ in finite time, i.e. almost all solutions converge exponentially fast toward $\{v < m_\delta\}$.
\end{itemize}
\end{corollary}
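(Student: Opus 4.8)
The plan is to reduce the corollary to the situation already settled in the proof of Theorem~\ref{deterministicrobustnesstheorem}. Once one exhibits the generator of $\X_t$ applied to $v$, as a function of $x$, bounded above by $-\alpha_\epsilon|x|^p$ on $\{x\in\Rn:|x|>\delta\}$ for some $\alpha_\epsilon>0$ with $\alpha_\epsilon\to c_2$ as $\epsilon=(\epsilon_1,\epsilon_2)\to0$, the three assertions follow by re-running, word for word, the stopping-time arguments of Theorem~\ref{deterministicrobustnesstheorem}: the optional-sampling estimate on the exit/re-entry times of $\{v<m_\delta\}$ gives the ``never exits'' statement; the decomposition of $V(\X_t)$ over the first hitting time of $\{v<m_\delta\}$ gives the exponential $p$-th moment bound with rate $\theta_\epsilon=\alpha_\epsilon/\beta$; and the Khasminskii positive-supermartingale argument gives the finite hitting time and the almost-sure exponential convergence. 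So the only genuine task is the computation and estimation of $\widehat{\LL v}$.

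First I would write out the driven dynamics. Exactly as in the proof of Theorem~\ref{probabilisticrobustnesstheorem}, decompose $(\uu_t-u^*_t)\,dt=\mu(\X_t)\,dt+\Sigma^{1/2}(\X_t)\,dY_t$ with $Y_t$ an independent standard Brownian motion on an auxiliary space $\Omega'$, so that on $\Omega\times\Omega'$
\[
	d\X_t=\bigl(f(\X_t)+h(\X_t)u^*_t+h(\X_t)\mu(\X_t)\bigr)dt+h(\X_t)\Sigma^{1/2}(\X_t)\,dY_t+g(\X_t)\,dW_t .
\]
Applying Ito's formula to $v(\X_t)$ and isolating the generator $\LL v$ of the optimally driven process — which, by (\ref{hjb}), Assumption~\ref{assump2} and the lower bound $\ell(x,u)\ge c_2|x|^p$, satisfies $\LL v(x)<-c_2|x|^p$ on $\Rn\setminus\{0\}$, precisely as in the proof of Theorem~\ref{deterministicrobustnesstheorem} — gives
\[
	\widehat{\LL v}(x)=\LL v(x)+\langle h(x)\mu(x),\partial_x v(x)\rangle+\tfrac12\tr\bigl[\Sigma(x)h(x)h(x)^\top\partial_{xx}v(x)\bigr].
\]

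Next I would bound the two correction terms and add them. The mean term is treated as in Theorem~\ref{deterministicrobustnesstheorem}: Cauchy--Schwarz together with $|\mu(x)|\le\epsilon_1$ and Assumption~\ref{derivatives} yields $\langle h(x)\mu(x),\partial_x v(x)\rangle\le\epsilon_1 c(1+|x|^p)$; the covariance term is treated exactly as in Theorem~\ref{probabilisticrobustnesstheorem}: $|\Sigma(x)|\le\epsilon_2$ together with Assumption~\ref{derivatives2} yields $\tfrac12\tr[\Sigma(x)h(x)h(x)^\top\partial_{xx}v(x)]\le\epsilon_2 c(1+|x|^p)$. Hence $\widehat{\LL v}(x)\le-c_2|x|^p+(\epsilon_1+\epsilon_2)c(1+|x|^p)$, and on $\{|x|>\delta\}$, setting
\[
	\alpha_\epsilon:=c_2-\Bigl(\tfrac1{\delta^p}+1\Bigr)(\epsilon_1+\epsilon_2)c,\qquad \theta_\epsilon:=\frac{\alpha_\epsilon}{\beta},
\]
one has $v(x)\le\beta|x|^p$ and $\widehat{\LL v}(x)\le-\alpha_\epsilon|x|^p$, with $\alpha_\epsilon>0$ whenever $\epsilon_1<\overline{\epsilon_1}$ and $\epsilon_2<\overline{\epsilon_2}$ for thresholds chosen so that $(\tfrac1{\delta^p}+1)(\overline{\epsilon_1}+\overline{\epsilon_2})c<c_2$, and $\alpha_\epsilon\to c_2$, hence $\theta_\epsilon\to\lambda$, as $\epsilon\to0$. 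Taking $V(t,x):=v(x)e^{\theta_\epsilon t}$ one checks $\widehat{\LL V}(t,x)\le0$ on $\{|x|>\delta\}$ and then applies the Theorem~\ref{deterministicrobustnesstheorem} machinery verbatim.

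I expect the main obstacle to be the growth bookkeeping in these two correction terms. Unlike in Theorem~\ref{deterministicrobustnesstheorem}, where the control enters $f(x,u)$ with a control-Lipschitz constant independent of $x$, here the controller error enters the dynamics \emph{multiplicatively} through $h(x)$, which is only Lipschitz and hence of at most linear growth in $|x|$. One must therefore verify that $\langle h(x)\mu(x),\partial_x v(x)\rangle$ and $\tr[\Sigma(x)h(x)h(x)^\top\partial_{xx}v(x)]$ grow no faster than $1+|x|^p$; this is exactly the role of the two alternatives in Assumption~\ref{derivatives2} (the $|x|^{p-2}$ Hessian bound, which absorbs the $|x|^2$ coming from $|h(x)|^2$, versus the $|x|^p$ Hessian bound with $h$ bounded) together with, for the mean term, control of the growth of $h$. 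Once that estimate is in hand nothing new occurs and the remainder is a mechanical repetition of the earlier proofs.
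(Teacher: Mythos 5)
Your proposal follows the paper's own route exactly: the paper likewise decomposes the error as $(\uu_t-u^*_t)\,dt=\mu(x)\,dt+\Sigma(x)^{1/2}\,dY_t$ and then simply asserts that the proofs of Theorems \ref{deterministicrobustnesstheorem} and \ref{probabilisticrobustnesstheorem} apply to the two parts, omitting all details; you have supplied those details, including the combined generator bound and the choice $\alpha_\epsilon=c_2-(\tfrac{1}{\delta^p}+1)(\epsilon_1+\epsilon_2)c$. Your worry about the linear growth of $h$ in the mean term is legitimate but is resolved by the standing modelling hypothesis $|f(x,u)-f(x,v)|\le c_1|u-v|$ uniformly in $x$, which for the control-affine form (\ref{gaussianperturbedsystemcontrol}) forces $h$ to be bounded.
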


\begin{proof}
If $\mu(x)\in\mathcal{B}_{\epsilon_1} = \{x\in\Rn:|x|\leq \epsilon_1\}$ then $(\uu_t(x) - u_t^* (x)) \sim \mathcal{N}(\mu(x),\Sigma(x))$ implies $(\uu_t-u^*_t)dt=\mu(x)+\Sigma(x)^{1/2} dY_t$ where $Y_t$ is a standard Brownian motion. It is then easily seen that the error is split in two parts, one part formed by the added Brownian motion and the other part formed by the deterministic error $\mu(x)$ with $|\mu(x)|\leq \epsilon_1$, $\forall x\in\Rn$. The proof of both Theorem \ref{deterministicrobustnesstheorem} and \ref{probabilisticrobustnesstheorem} apply readily in this case and we omit the details for brevity.
\end{proof}

In many practical scenarios it is impossible to compute the optimal control instantaneously and one must instead resort to a sample and hold approach to control whereby the control is computed at discrete-time increments and held constant in between such times. Stability results for such approaches have been considered, e.g., in deterministic settings \cite{mayne1990receding} and stochastic settings \cite{mahmood2012lyapunov}. We now provide a related stability result.

\begin{proposition}
\label{sampledcontrolprop}
Consider the more general controlled process (\ref{generalsystemcontrol}) and suppose Assumptions \ref{assump1}, \ref{assump2}, and the relevant modelling hypotheses hold. Suppose that under a given control law $u_t(x)$ the solution $X^{0,x_0,u}_t$ to the SDE (\ref{generalsystemcontrol}) with initial condition $x_0$ satisfies
\begin{equation}
\label{originalcontrol}
	\E|X^{0,x_0,u}(t)|^p \leq c e^{-\lambda t} |x_0|^p + m, \qquad \forall \mbox{ t} \geq 0
\end{equation}
for some positive constants $\lambda, c$ and $m$. Now fix a time step $\Delta>0$ and let the time interval $t\in[0,\infty)$ be discretised according to $t_0=0$, $t_1=\Delta$, $t_2=2\Delta$, $\ldots$, $t_k=k\Delta$. Consider the control law defined by $\uu_{t}(x_{t})=u_{t_k}(x_{t_k})$ for $t\in[t_k,t_{k+1})$, i.e. the control $\uu_t$ is held constant over small time intervals with a value given by the control $u_t$ at the beginning of each interval. Then there exists a constant step size $\overline{\Delta}>0$ and constants $M_1,M_2>0$ such that
\begin{equation}
	\E|\X^{0,x_0,\widehat{u}}(t)|^p \leq M_1 e^{-\frac{\lambda}{4} t} |x_0|^p + M_2, \qquad \forall \mbox{ t} \geq 0 \nonumber
\end{equation}
for all $0<\Delta \leq \overline{\Delta}$ and where we denote by $\X^{0,x_{0},\uu}_{t}$ trajectories of (\ref{generalsystemcontrol}) driven by $\uu_t(x)$ with $\X_0=X_0=x_0$.
\end{proposition}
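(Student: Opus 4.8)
The plan is to reduce the infinite-horizon assertion to a contraction over a single fixed-length ``macro'' time-step $s^{*}>0$ (independent of $\Delta$), by comparing, on each macro-step, the sampled-hold trajectory $\X$ against the continuous-feedback trajectory restarted at the start of that macro-step. First, since $f,g$ are time-independent and (in the receding-horizon setting) the feedback $u_{t}(\cdot)$ is time-invariant --- equivalently, reading (\ref{originalcontrol}) as holding from an arbitrary start time --- the closed loop under $u$ is a time-homogeneous Markov process, so by the Markov property (\ref{originalcontrol}) upgrades to $\E[\,|X^{r,x,u}_{r+s}|^{p}\mid\F_{r}]\le c\,e^{-\lambda s}|x|^{p}+m$ for all $r,s\ge0$. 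I would then fix $s^{*}$ large enough that the effective rate exceeds $\lambda/2$, i.e.\ $s^{*}\ge(2\ln c)/\lambda$ --- note $c\ge1$, since (\ref{originalcontrol}) at $t=0$ forces $|x_{0}|^{p}\le c|x_{0}|^{p}+m$ for all $x_{0}$ --- and set $q:=c\,e^{-\lambda s^{*}}$, so that $q\le e^{-\lambda s^{*}/2}<1$.

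The core estimate is that the effect of sampling-and-holding vanishes, uniformly in the initial state, over one macro-step as $\Delta\to0$. Fixing $j\ge0$ and conditioning on $\F_{js^{*}}$, let $Z$ solve (\ref{generalsystemcontrol}) under the continuous feedback $u$ on $[js^{*},(j+1)s^{*}]$, driven by the same Brownian path, with $Z_{js^{*}}=\X_{js^{*}}$. Comparing $\X$ and $Z$ via the Lipschitz bounds on $f$ and $g$ in $x$, the Lipschitz bound on $f$ in $u$, a modulus of continuity of the feedback in $x$, the standard $L^{p}$-estimate that the state moves only $O(\Delta^{1/2})$ over one sampling step, and the Lipschitz dependence of the continuous flow on its initial datum over the \emph{fixed} horizon $s^{*}$ --- telescoping the single-step errors across the $O(1/\Delta)$ sampling steps inside the macro-step and applying Gronwall --- I expect to obtain $\E[\,|\X_{(j+1)s^{*}}-Z_{(j+1)s^{*}}|^{p}\mid\F_{js^{*}}]\le\rho(\Delta)\,(1+|\X_{js^{*}}|^{p})$ with $\rho(\Delta)\to0$ as $\Delta\to0$; here the per-step error is $O(\Delta^{1+p/2})$, so the $O(1/\Delta)$ steps accumulate only to $O(\Delta^{p/2})$, and a crude Gronwall moment bound $\sup_{t\in[js^{*},(j+1)s^{*}]}\E[|\X_{t}|^{p}\mid\F_{js^{*}}]\le C_{s^{*}}(1+|\X_{js^{*}}|^{p})$, valid because $f$ and the feedback grow at most linearly, keeps all constants independent of $\X_{js^{*}}$.

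Combining the two previous displays with $|a+b|^{p}\le(1+\eta)^{p-1}|a|^{p}+C_{\eta}|b|^{p}$ and the Markov bound $\E[|Z_{(j+1)s^{*}}|^{p}\mid\F_{js^{*}}]\le q|\X_{js^{*}}|^{p}+m$ gives $\E[|\X_{(j+1)s^{*}}|^{p}\mid\F_{js^{*}}]\le((1+\eta)^{p-1}q+C_{\eta}\rho(\Delta))|\X_{js^{*}}|^{p}+(1+\eta)^{p-1}m+C_{\eta}\rho(\Delta)$. Picking $\eta$ small so that $(1+\eta)^{p-1}q\le e^{-3\lambda s^{*}/8}$, and then $\overline{\Delta}>0$ small so that $C_{\eta}\rho(\Delta)\le e^{-\lambda s^{*}/4}-e^{-3\lambda s^{*}/8}$ for all $0<\Delta\le\overline{\Delta}$, makes the coefficient $\le e^{-\lambda s^{*}/4}$; taking expectations and iterating the scalar recursion $b_{j+1}\le e^{-\lambda s^{*}/4}b_{j}+M'$ with $b_{j}:=\E|\X_{js^{*}}|^{p}$ yields $\E|\X_{js^{*}}|^{p}\le e^{-(\lambda/4)(js^{*})}|x_{0}|^{p}+M'/(1-e^{-\lambda s^{*}/4})$ on the macro-grid. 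Finally, for $t\in[js^{*},(j+1)s^{*})$ the crude moment bound above again gives $\E[|\X_{t}|^{p}\mid\F_{js^{*}}]\le C_{s^{*}}(1+|\X_{js^{*}}|^{p})$, and absorbing $C_{s^{*}}e^{\lambda s^{*}/4}$ into the constants produces $\E|\X^{0,x_{0},\uu}_{t}|^{p}\le M_{1}e^{-(\lambda/4)t}|x_{0}|^{p}+M_{2}$ for all $t\ge0$, with $M_{1},M_{2},\overline{\Delta}$ as claimed.

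The main obstacle is the uniform-in-$x_{0}$, mesh-vanishing sampled-hold error bound of the second paragraph: it needs a quantitative modulus of continuity (e.g.\ Lipschitz in $x$) for the feedback $u_{t}(\cdot)$ --- consistent with, and indeed required for, the well-posedness of the closed-loop SDE implicitly assumed in the statement --- together with care that the $O(1/\Delta)$ single-step errors accumulate only to a quantity tending to $0$ and that no constant blows up with $|x_{0}|$ (which is exactly where the at-most-linear growth of $f$ and of the feedback is used). The remaining ingredients --- the Markov upgrade of (\ref{originalcontrol}), the macro-step recursion, and the grid-to-continuum passage --- are routine; a minor technical point, absorbed into $\rho(\Delta)$, is the treatment of the at most one sampling step straddling each macro-boundary.
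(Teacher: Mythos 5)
Your proposal is sound in outline but takes a genuinely different route from the paper. The paper's proof is a reduction to discrete time: it observes that the Euler--Maruyama discretisations (with step $\Delta$) of the closed loop under the continuous feedback $u_t$ and under the sample-and-hold law $\uu_t$ are \emph{identical} processes (both evaluate the control only at the grid points), and then invokes, in both directions, the Higham--Mao--Stuart equivalence between $p$-th moment exponential stability of an SDE and of its Euler--Maruyama scheme for small enough $\Delta$ --- losing a factor of $2$ in the rate at each of the two passages, which is exactly where the $\lambda/4$ comes from. Your argument instead stays in continuous time: a Markov upgrade of (\ref{originalcontrol}), a fixed macro-horizon $s^{*}$ chosen so that $c\,e^{-\lambda s^{*}}<1$ gives a one-step contraction, a Gronwall perturbation estimate showing the sample-and-hold error over one macro-step is $\rho(\Delta)(1+|x|^{p})$ with $\rho(\Delta)\to0$, and iteration of the resulting scalar recursion. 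Your route is more self-contained (the paper's citation must itself be ``modified'' to handle stability to a ball rather than to the origin, and the authors only assert this modification), and it makes the source of the rate degradation transparent and in fact shows any rate below $\lambda$ is achievable, whereas the paper's double application of the cited equivalence hard-wires $\lambda/4$. The price is the same in both cases and you are right to flag it: your perturbation estimate needs the feedback $x\mapsto u_{t}(x)$ to be Lipschitz (or at least uniformly continuous with linear growth) so that the closed-loop drift is Lipschitz and the one-step control mismatch is controlled by the $O(\Delta^{1/2})$ state increment; the paper needs exactly the same regularity for the closed-loop coefficients to satisfy the global Lipschitz hypotheses of the cited Euler--Maruyama result, but does not say so. Neither proof is fully rigorous as written on this point; yours at least isolates it explicitly. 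One small presentational caveat: the ``telescoping over $O(1/\Delta)$ sampling steps'' is better phrased as a single Gronwall inequality over $[js^{*},(j+1)s^{*}]$ with source term $\int|u_{t_{k}}(\X_{t_{k}})-u_{t}(\X_{t})|\,dt$, but this does not change the estimate.
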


\begin{proof}
Consider the two stochastic differential equations of the form (\ref{generalsystemcontrol}) but driven by the two different controls defined in the statement of the theorem
\begin{eqnarray}
	dX^{0,x_{0},u}_{t} &=& f(X^{0,x_{0},u}_{t},u_{t})dt + g(X^{0,x_{0},u}_{t},u_{t})dW_{t} \nonumber\\
	d\widehat{X}^{0,x_{0},\widehat{u}}_{t} &=& f(\widehat{X}^{0,x_{0},\widehat{u}}_{t},\widehat{u}_{t})dt + g(\widehat{X}^{0,x_{0},\widehat{u}}_{t},\widehat{u}_{t})dW_{t} \nonumber
\end{eqnarray}
Then, since both processes share a common initial point, it is straightforward to show that the two Euler-Maruyama time-discretisations of both processes are identical. That is, by induction on $k\in\mathbb{N}$ we have
\begin{eqnarray}
	\widehat{Z}^{0,x_{0},\widehat{u}}_{t_{k+1}} &=& \widehat{Z}^{0,x_{0},\widehat{u}}_{t_{k}} + f(\widehat{Z}^{0,x_{0},\widehat{u}},\widehat{u}_{t_k})\Delta + g(\widehat{Z}^{0,x_{0},\widehat{u}}_{t_k},\widehat{u}_{t_s})W_{\Delta} \nonumber \\
	&=& {Z}^{0,x_{0},{u}}_{t_{k}} + f({Z}^{0,x_{0},{u}},{u}_{t_k})\Delta + g({Z}^{0,x_{0},{u}}_{t_k},{u}_{t_k})W_{\Delta} ~=~ {Z}^{0,x_{0},{u}}_{t_{k+1}} \nonumber
\end{eqnarray}
where $W_{\Delta}\sim\mathcal{N}(0,\Delta\cdot \mathrm{I})$ and $Z_0=\widehat{Z}_0=x_0$.

There is a known result \cite{higham2003exponential} which states that $p$-th moment exponential stability of a stochastic differential equation implies $p$-th moment exponential stability of its Euler-Maruyama simulation and vice-versa (if the time-step $\Delta>0$ is sufficiently small). Thus, with minor modifications to the main result of \cite{higham2003exponential} it follows\footnote{The result in \cite{higham2003exponential} must be modified since here we consider exponential stability to a ball of the origin (not the origin itself as in \cite{higham2003exponential}). Thus, instead of the strong result of \cite{higham2003exponential}, we merely want exponential stability to the ball for a SDE to imply exponential stability to a (possibly different) ball for its Euler-Maruyama simulation (and vice-versa). The fact this relaxation is true follows easily (intuitively) given the strong result in \cite{higham2003exponential}. It is causally unsurprising. Details on the modifications required to relax \cite{higham2003exponential} as stated are available upon request (but needlessly distract the proof otherwise).} that if (\ref{originalcontrol}) holds, then for a sufficiently small step size $\Delta$, the Euler-Maruyama approximation ${Z}^{0,x_{0},{u}}_t$ of $X^{0,x_0,u}_t$ satisfies
$$
	\E|{Z}^{0,x_{0},{u}}_t|^p\leq c |x_0|^p e^{-\frac{1}{2} \lambda t}  + M, \qquad \forall \mbox{ t} \geq 0
$$
for some $M>0$. The same holds for $\widehat{Z}^{0,x_{0},\hat{u}}_t$ as this discrete-time process is identical to ${Z}^{0,x_{0},{u}}_t$. Again, with slight modification to the results in \cite{higham2003exponential} it follows that if $\Delta$ is small enough then
$$
	\E|\X^{0,x_0,\widehat{u}}_t|^p \leq M_1 e^{-\frac{\lambda}{4} t} |x_0|^p + M_2, \qquad \forall \mbox{ t} \geq 0
$$
for some positive constants $M_1,M_2$. This completes the proof.
\end{proof}

A straightforward consequence of Proposition \ref{sampledcontrolprop} is that the convergence results given thus far concerning the various controller approximation errors will continue to hold even if the control is computed only at discrete-time instants and held constant in the interval between such instants (provided that the time elapsed between each updates is small).

The next result brings everything together.

\begin{corollary}
\label{sampledcontrolcorollary}
Suppose the assumptions of either Theorem \ref{deterministicrobustnesstheorem}, Theorem \ref{probabilisticrobustnesstheorem} or Corollary \ref{mixederror} hold. Suppose also that $\uu_t$ is an approximately optimal control law satisfying the requirements of the respective result; e.g. $|\uu_t - u^*_t|< \epsilon < \overline{\epsilon}$ in Theorem \ref{deterministicrobustnesstheorem} etc. Fix $\delta>0$, we know that there exists a constant $\theta_\epsilon>0$, satisfying the statement of the respective result, such that
$$
	\E|\X^{0,x_0,\uu}(t)|^p\leq  \frac{1}{c_4}\left(\beta e^{-\theta_\epsilon t}|x_0|^p +m_\delta\right),   \quad\quad \forall ~ t\geq 0
$$
Now suppose that $\uu_t(x)$ is computed at discrete times $t_k$ with $t_0=0$, $t_1=\Delta$, $t_2=2\Delta$, $\ldots$, $t_k=k\Delta$ and held constant on the interval $t\in[t_k,t_{k+1})$ as described in Proposition \ref{sampledcontrolprop}. Then there exists a constant step size $\overline{\Delta}>0$ and constants $M_1,M_2>0$ such that
$$
	\E|\X^{0,x_0,\uu}(t)|^p\leq  M_1\beta e^{-\frac{\theta_\epsilon}{4} t}|x_0|^p +M_2m_\delta,  \quad\quad \forall ~ t\geq 0
$$
for all $0<\Delta \leq \overline{\Delta}$.
\end{corollary}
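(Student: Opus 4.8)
The plan is to recognise that this corollary is essentially a composition: the cited robustness result (Theorem \ref{deterministicrobustnesstheorem}, Theorem \ref{probabilisticrobustnesstheorem} or Corollary \ref{mixederror}) already supplies, for the continuously applied approximate control $\uu_t$, a bound of precisely the form (\ref{originalcontrol}) that Proposition \ref{sampledcontrolprop} takes as its hypothesis, so the sampled bound will follow by invoking that proposition with $u_t := \uu_t$ and $\lambda := \theta_\epsilon$, and then renaming constants.

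First I would cast the conclusion of the relevant robustness result as a single estimate valid for every $x_0 \in \Rn$. For $x_0 \notin \{v < m_\delta\}$ this is exactly the stated bound $\E|\X^{0,x_0,\uu}_t|^p \le \frac{1}{c_4}(\beta e^{-\theta_\epsilon t}|x_0|^p + m_\delta)$. For $x_0 \in \{v < m_\delta\}$, Lemma \ref{lemmavbounds} gives $c_4|x_0|^p \le v(x_0) < m_\delta$, so $\beta e^{-\theta_\epsilon t}|x_0|^p \ge 0$ and the never-exit estimate $\E|\X^{0,x_0,\uu}_t|^p \le m_\delta/c_4$ is dominated by the same right-hand side; hence
$$
	\E|\X^{0,x_0,\uu}_t|^p ~\le~ \tfrac{\beta}{c_4}\, e^{-\theta_\epsilon t}|x_0|^p + \tfrac{m_\delta}{c_4}, \qquad \forall\, x_0 \in \Rn,~ \forall\, t \ge 0 ,
$$
which is (\ref{originalcontrol}) with the identifications $\lambda = \theta_\epsilon$, $c = \beta/c_4$, $m = m_\delta/c_4$. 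I would also record that in each case the closed loop under $\uu_t$ — a deterministic perturbation in Theorem \ref{deterministicrobustnesstheorem}, or an evaluation of the perturbed controller at the grid points $t_k$ (held constant on $[t_k,t_{k+1})$) in the probabilistic and mixed cases, where (\ref{gaussianperturbedsystemcontrol}) is the instance of (\ref{generalsystemcontrol}) with $f(x,u) = f(x) + h(x)u$ — is an SDE of the form (\ref{generalsystemcontrol}) with the control entering only the drift, so that replacing $\uu_t$ by its sample-and-hold version is exactly the construction analysed in Proposition \ref{sampledcontrolprop}.

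Applying Proposition \ref{sampledcontrolprop} with $u_t := \uu_t$ and the identifications above then yields a step size $\overline{\Delta} > 0$ and constants $\widetilde{M}_1, \widetilde{M}_2 > 0$ such that $\E|\X^{0,x_0,\uu}_t|^p \le \widetilde{M}_1 e^{-\theta_\epsilon t/4}|x_0|^p + \widetilde{M}_2$ for all $t \ge 0$ and all $0 < \Delta \le \overline{\Delta}$. Since $\delta > 0$ gives $\beta = c_5(1+\tfrac{1}{\delta^p}) > 0$ and, by Lemma \ref{lemmavbounds}, $m_\delta \ge c_4 \delta^p > 0$, setting $M_1 := \widetilde{M}_1/\beta$ and $M_2 := \widetilde{M}_2/m_\delta$ produces the asserted form $\E|\X^{0,x_0,\uu}_t|^p \le M_1 \beta\, e^{-\theta_\epsilon t/4}|x_0|^p + M_2 m_\delta$.

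I do not expect a genuine obstacle here, since all the analytic content is already contained in Proposition \ref{sampledcontrolprop} (and behind it the Euler--Maruyama stability-transfer of \cite{higham2003exponential}). The only points deserving care are bookkeeping: the uniform-in-$x_0$ consolidation above, so that the hypothesis of Proposition \ref{sampledcontrolprop} is met verbatim; and, in the probabilistic and mixed cases, the observation that sampling-and-holding the perturbed controller (evaluate at $t_k$, freeze on $[t_k,t_{k+1})$) leaves the closed loop an SDE of the form (\ref{generalsystemcontrol}) to which Proposition \ref{sampledcontrolprop} applies. Both are routine, and the degradation of the rate from $\theta_\epsilon$ to $\theta_\epsilon/4$ and the enlargement of the residual ball are inherited directly from Proposition \ref{sampledcontrolprop}.
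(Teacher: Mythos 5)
Your proposal is correct and matches the paper's intended argument: the paper gives no explicit proof, describing the corollary as a straightforward consequence of combining the robustness bound of the respective theorem (as the hypothesis (\ref{originalcontrol}) with $\lambda=\theta_\epsilon$, $c=\beta/c_4$, $m=m_\delta/c_4$) with Proposition \ref{sampledcontrolprop}, which is exactly the composition you carry out. Your consolidation of the two initial-condition cases and the final renaming of constants are the same bookkeeping the paper leaves implicit.
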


\section{Monte Carlo Methods for Approximately Optimal Stochastic Control}

In this section we outline an approximation method to compute the optimal nonlinear stochastic RHC. This method relies on simulating a stochastic process that is related to the original controlled system but that is independent of the control signal. The approximation method outlined in this section was first considered by Kappen \cite{kappen2005linear,kappen2005path} for finite-horizon optimal control and then subsequently studied, applied, and generalised in, e.g.,  \cite{van2008graphical,todorov2009efficient,theodorou2010generalized,BroekUAI2010,bierkens2014explicit,thijssen2014path}. 

Recall that we are considering the nonlinear controlled process $X^{0,x_{0},u}_{t}(\omega):[0,\infty)\times\Omega\to\Rn$ defined by
\begin{equation}
	dX^{0,x_{0},u}_{t} = f(X^{0,x_{0},u}_{t})dt + h(X^{0,x_{0},u}_{t})u_{t}dt + g(X^{0,x_{0},u}_{t})dW_{t}\label{montecarlosystemcontrol}
\end{equation}
with the existing modelling hypotheses holding. Here, $h(x)$ and $g(x)$ (which may be non-square) are assumed (with no real loss of generality) to have full rank. Note, $h(x)$ full rank implies the existence and uniqueness of a left-inverse, i.e. a function $h^{-1}(x) : \R^n\rightarrow \R^{m\times n}$ such that $h^{-1}h(x)={\mathrm{I}}, \forall x \in\R^n$. Associate with (\ref{montecarlosystemcontrol}) the following receding cost functional
\begin{equation}
	w(t, s, x, u) := \E\left[\phi(X^{t+s,x,u}_{t+T}) + \int_{t+s}^{t+T} \tfrac{1}{2}u_r^\top{R}u_r + \ell(X^{t+s,x,u}_{r})dr\right] \nonumber
\end{equation}
\noindent at any time $t\geq0$ with $s\in[0,T]$ and where the cost on the control input is now quadratic and $R\in\R^{m\times m}$ is a constant positive definite matrix. We define the value-to-go functional as
\begin{equation}
	v(t,s,x) := \inf_{u_{r}\in\U_{[t+s,t+T]}}~w(t,s, x, u) ~=\inf_{u_{r}\in\U_{[t+s,t+T]}} \E\left[\phi(X^{t+s,x,u}_{t+T}) + \int_{t+s}^{t+T} \tfrac{1}{2}u_r^\top{R}u_r + \ell(X^{t+s,x,u}_{r})dr\right] \label{montecarlovalue}
\end{equation}
where $\U_{[t+s,t+T]}$ is the set of admissible controls in the interval $[t+s,t+T]$. 

The HJB equation associated with the value functional (\ref{montecarlovalue}) is
\begin{equation}
	-\partial_{s}v(s,x) = \inf_{u\in\U}~ \left[\ell(x) + \tfrac{1}{2}u^\top R u + \left[f(x)+h(x)u\right]^\top \partial_x v(s,x) + \tfrac{1}{2}\tr\left[g(x)g(x)^\top\partial_{xx}v(s,x)\right]\right] \nonumber
\end{equation}
\noindent with a terminal boundary $v(T,x) = \phi(x)$. The optimal control on the interval defined by $s\in [0,T]$ is just $u^*_{t+s}(x) = -R^{-1}h(x)^\top \partial_x v(s,x)$ for all $x\in\Rn$. In (one-step) RHC we are only interested in the solution $v(s,x)$ at $s=0$. We have
$$
u^*_t(x) = -R^{-1}h(x)^\top \partial_x v(x), \quad \forall x\in\Rn
$$
Substituting the optimal control back into the HJB equation gives
\begin{equation}
	-\partial_{s}v(s,x) =  \ell(x) - \tfrac{1}{2} (\partial_x v(s,x))^\top h(x)R^{-1}h(x)^\top \partial_x v(s,x) + f(x)^\top \partial_x v(s,x) + \tfrac{1}{2}\tr\left[g(x)g(x)^\top\partial_{xx}v(s,x)\right]\nonumber
\end{equation}
which is a nonlinear partial differential equation. However, we note the following log-transform of $v(s,x)$
$$
	\psi(s,x) = \exp\left[\frac{-v(s,x)}{\gamma}\right]
$$
for all $x\in \Rn$, $s\in[0,T]$ and for some finite $\gamma>0$. This transform arises in a number of stochastic control scenarios \cite{fleming2006controlled}. We often write $\psi(x)$ in place of $\psi(0,x)$. We note the following required assumption.

\begin{assumption} \label{assump4}
	We assume that there exists $\gamma \in \mathbb{R}$ such that $\gamma h(x)R^{-1}h(x)^\top=g(x)g(x)^\top$.
\end{assumption}

This assumption\footnote{This assumption is satisfied in many applications of stochastic control; e.g. in machine learning and robotics \cite{kappen2005path,van2008graphical,todorov2009efficient,theodorou2010generalized,BroekUAI2010,bierkens2014explicit}. This assumption requires the dimension of the noise and control to be equal and for the noise and control to act on the same subspace. Then, the cost of control can be related to the noise variance as shown. The interpretation of this relationship is that along directions where the noise variance is small, the control is deemed more expensive while, conversely, in those directions in which the noise has larger variance the control is cheap \cite{kappen2005path}. Indeed, this may be desirable in practice since it forces control energy to be spent mostly in those directions in which the noise level may be problematic \cite{theodorou2010generalized}.} is standard in the path integral formulation of optimal control \cite{kappen2005path}, but it also appears more generally in the stochastic optimal control literature \cite{fleming2006controlled}. This assumption allows us \cite{fleming2006controlled,kappen2005path} to write
\begin{equation}
	-\partial_{s}\psi(s,x) =  -\frac{1}{\gamma}\ell(x)\psi(s,x) + f(x)^\top \partial_x \psi(s,x) + \tfrac{1}{2}\tr\left[g(x)g(x)^\top\partial_{xx}\psi(s,x)\right]\nonumber
\end{equation}
which is a linear partial differential equation on $[0,T]$ with terminal condition $\psi(T,x) = \exp[-\phi(x)/\gamma]$. It now follows by the Feynman-Kac formula that the solution to the above PDE at $(0,x)$ is given by
$$
	\psi(x) = \E\left[\exp\left(-\frac{1}{\gamma}\phi(Z^{t,x}_{T+t}) - \frac{1}{\gamma}\int_{t}^{T+t} \ell(Z^{t,x}_s)ds \right) \right] 
$$
where now $Z^{t,x}_s(\omega):[t,T+t]\times\Omega\to\Rn$ is a nonlinear (uncontrolled) process satisfying
\begin{equation}
	dZ^{t,x}_{s} = f(Z^{t,x}_s)ds + g(Z^{t,x}_s)dW_s 	\label{montecarlosystemuncontrolled}
\end{equation}
with initial condition $Z_t^{t,x}=x$. Note that
$$
	u^*_t(x) = -R^{-1}h(x)^\top \partial_x v(x) = \gamma R^{-1}h(x)^\top \partial_x \log\psi(x)
$$
Now, given the solution for $\psi(x)$ derived via the Feynman-Kac formula, it is informally straightforward to devise a Monte Carlo approximation for the control; e.g. one can first simulate sample paths of (\ref{montecarlosystemuncontrolled}), then form a Monte Carlo approximation of the integral for $\psi(x)$, and approximate the spatial derivative of $\psi(x)$ via differencing. Going forward we explore a more formal Monte Carlo approximation circumventing the need for crude numerical (spatial) differentiation. Firstly, we need the following result. 

\begin{proposition} \label{optimalcontrolMonteCarlo}
	Suppose Assumptions \ref{assump1} and \ref{assump4}, and the modelling hypotheses hold. Then
\begin{equation}
\label{control formula1}
u^*_t(x) ~=~ -R^{-1}h(x)^\top \partial_x v(x) ~=~ \lim_{r\rightarrow 0} \frac{1}{r} \frac{\E\left[e^{-\frac{1}{\lambda}\left(\phi (Z^{t,x}_{t+T}) +\int_{t}^{t+T} \ell (Z^{t,x}_s)ds \right)} \int_0^r h^{-1}(Z^{t,x}_s)g(Z^{t,x}_s)dW_s   \right]}{\E\left[e^{-\frac{1}{\lambda}\left(\phi(Z^{t,x}_{t+T}) +\int_{t}^{T+t} \ell (Z^{t,x}_s)ds \right)}\right]}
\end{equation}
where the expectations are integrals over paths defined by the SDE (\ref{montecarlosystemuncontrolled}) with initial condition $Z^{t,x}_t=x$.
\end{proposition}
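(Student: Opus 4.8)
By time-invariance of the value functional it suffices to take $t=0$; write $Z_s:=Z^{0,x}_s$ for the uncontrolled process (\ref{montecarlosystemuncontrolled}) started at $x$ and $\Phi:=\exp\big(-\tfrac1\gamma\phi(Z_T)-\tfrac1\gamma\int_0^T\ell(Z_s)\,ds\big)$, so that $\psi(x)=\E[\Phi]$ is the Feynman--Kac representation derived above and $u^*_0(x)=\gamma R^{-1}h(x)^\top\partial_x\log\psi(x)$. Let $X_s$ be the strong solution of (\ref{montecarlosystemcontrol}) driven by the optimal feedback $u^*_s(\cdot)$, started at $x$. By Assumption~\ref{assump4}, $h(y)u^*_s(y)=\gamma\,h(y)R^{-1}h(y)^\top\partial_x\log\psi(s,y)=g(y)g(y)^\top\partial_x\log\psi(s,y)$, so $X$ is $Z$ with the additional drift $g(y)v(s,y)$, where $v(s,y):=g(y)^\top\partial_x\log\psi(s,y)$. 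The plan is to establish, for each small $r>0$,
\[
	\frac1r\,\E\!\left[\int_0^r u^*_s(X_s)\,ds\right] ~=~ \frac1r\cdot\frac{\E\!\left[\Phi\int_0^r h^{-1}(Z_s)g(Z_s)\,dW_s\right]}{\E[\Phi]},
\]
and then let $r\to0$.

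The conceptual step is a change of measure. Applying It\^o's formula to $\log\psi(s,Z_s)$ and substituting the linear PDE $-\partial_s\psi=-\tfrac1\gamma\ell\psi+f^\top\partial_x\psi+\tfrac12\tr[gg^\top\partial_{xx}\psi]$ together with the terminal data $\psi(T,\cdot)=\exp(-\phi/\gamma)$ makes the drift and second-order terms cancel and leaves
\[
	\int_0^T v(s,Z_s)^\top dW_s-\tfrac12\int_0^T|v(s,Z_s)|^2\,ds ~=~ -\tfrac1\gamma\phi(Z_T)-\tfrac1\gamma\int_0^T\ell(Z_s)\,ds-\log\psi(x);
\]
that is, the Girsanov density associated with the drift shift $g\,v$ equals $\Phi/\psi(x)$, so the law $P^X$ of $X$ on $C([0,T];\Rn)$ is absolutely continuous with respect to the law $P^Z$ of $Z$ with $dP^X/dP^Z=\Phi/\psi(x)$. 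That this density integrates to $1$ is essentially automatic: $s\mapsto\psi(s,Z_s)\exp(-\tfrac1\gamma\int_0^s\ell(Z_u)\,du)/\psi(x)$ is a nonnegative local martingale equal to $\Phi/\psi(x)$ at $s=T$, and its expectation there is $1$ by the Feynman--Kac formula already invoked, so it is a genuine martingale. The growth bound $c_4|x|^p\le v(x)\le c_5(1+|x|^p)$ from Lemma~\ref{lemmavbounds}, the gradient estimate $|\partial_x v(x)|\le c_6(1+|x|^p)$ recalled in the paper, and the polynomial moment bounds for $Z$ on $[0,T]$ are what make the It\^o and Girsanov manipulations licit (in particular $\int_0^T|v(s,Z_s)|^2\,ds<\infty$).

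Given this, I would project out the control. Under $P^X$ write $dX_s=f(X_s)\,ds+h(X_s)u^*_s(X_s)\,ds+g(X_s)\,d\widetilde W_s$ with $\widetilde W$ a $P^X$-Brownian motion, left-multiply by the left inverse $h^{-1}(X_s)$ (which exists since $h$ has full rank), integrate on $[0,r]$ and take $\E^X$: the $d\widetilde W_s$-term is a $P^X$-martingale null at $0$, so $\E^X[\int_0^r u^*_s(X_s)\,ds]=\E^X[\int_0^r h^{-1}(X_s)(dX_s-f(X_s)\,ds)]$. Transporting this path functional to $P^Z$ through the density $\Phi/\psi(x)$, and using that under $P^Z$ one has $dZ_s-f(Z_s)\,ds=g(Z_s)\,dW_s$, turns the right side into $\psi(x)^{-1}\E^Z[\Phi\int_0^r h^{-1}(Z_s)g(Z_s)\,dW_s]$, which is exactly the displayed identity. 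The only subtlety here is that the It\^o integral must be read as the same measurable functional of the path under the two (equivalent) measures, which is standard.

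Finally, divide by $r$ and send $r\to0$. Almost surely under $P^X$, $\tfrac1r\int_0^r u^*_s(X_s)\,ds\to u^*_0(X_0)=u^*_0(x)$, by joint continuity of $(s,y)\mapsto u^*_s(y)$ near $(0,x)$ (valid since $v\in C^{1,2}$ there under Assumption~\ref{assump1}) and path continuity of $X$ with $X_0=x$; the right side converges by the form of (\ref{control formula1}). I expect this $r\to0$ interchange to be the main technical obstacle: one must upgrade the a.s.\ convergence to convergence in $L^1(P^X)$, i.e.\ prove uniform integrability of $\tfrac1r\int_0^r u^*_s(X_s)\,ds$, which reduces to controlling moments of the optimal feedback $u^*_s(X_s)$ over a short initial window --- handled by localising near $x$ and combining the polynomial growth of $u^*$ (via the gradient estimate on $v$) with moment bounds on $X$. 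The case $t\neq0$ then follows by time-invariance of the value functional and of (\ref{montecarlosystemuncontrolled}).
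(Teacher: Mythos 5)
Your argument is correct and is essentially the same one the paper relies on: the paper's proof is a one-line citation to the Thijssen--Kappen path-integral result (with $h=g$), and what you have written out --- the Feynman--Kac representation $\psi(x)=\E[\Phi]$, the It\^o/Girsanov identification of $\Phi/\psi(x)$ as the density of the optimally controlled law with respect to the uncontrolled law, left-multiplication by $h^{-1}$ to project out the control, and the $r\to0$ limit --- is precisely the derivation underlying that reference, together with the ``straightforward generalisation'' to $h\neq g$ via the left-inverse that the paper asserts without detail. The points you flag as technical obstacles (the true-martingale property of the Girsanov density and the uniform integrability needed for the $r\to0$ interchange) are exactly the ones swept under the rug by the citation, and your sketches for handling them via Lemma~\ref{lemmavbounds} and the gradient estimate are the right ones.
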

\begin{proof}
	This result appears in \cite{thijssen2014path} with $h(x)=g(x)$ and it is straightforward to generalise.
\end{proof}

The controller form in Proposition \ref{optimalcontrolMonteCarlo} (and variations of such) is often referred to as the path integral formulation of optimal control \cite{kappen2005path}. At this stage, it may appear as though the reformulated optimal controller has been significantly complicated. However, the optimal control as given in Proposition \ref{optimalcontrolMonteCarlo} is well suited to Monte Carlo approximation.

The Monte Carlo approach to RHC is given by Algorithm \ref{algorithm1}. Note also that we consider two time-discretizations defined by $\Delta_1>0$ and $\Delta_2>0$ respectively. The first, $\Delta_1$, captures the sample and hold application in which the control is computed at discrete-time steps and held constant over those intervals; i.e. we approximate the optimal control $u^*_t(x)$ by $\widehat{u}_{t}(\x_{t}) = \widehat{u}_{t_k}(\x_{t_k})$ over $t\in[t_k,t_{k+1})=[k\Delta_1,(k+1)\Delta_1)$. We denote by $\X^{0,x_{0},\uu}_{t}$ trajectories of (\ref{montecarlosystemcontrol}) with $\X_0=X_0=x_0$ driven by $\uu_{t}(x_{t})$. The second time-discretization, $\Delta_2$, is found solely within Algorithm \ref{algorithm1} and defines the time-step employed during the numerical simulation of (\ref{montecarlosystemuncontrolled}) used to actually compute $\widehat{u}_{t_k}(\x_{t_k})$ at each $t_k$.

\begin{table}[!ht]
\noindent\makebox[\linewidth]{\rule{\textwidth}{1pt}}
\caption{{\footnotesize Optimal Control Approximation via Monte Carlo Simulation}} \label{algorithm1}
\noindent\makebox[\linewidth]{\rule{\textwidth}{1pt}}

{\footnotesize
\begin{description}
  \item[Given at time t=0:] \hfill
  	\begin{enumerate}
    	\item Model hypotheses: $f(\cdot)$, $g(\cdot)$, $h(\cdot)$, $\phi(\cdot)$, $\ell(\cdot)$, $R$, $T$, and $\gamma$.
		\item Initial starting point: $x_0\in\mathbb{R}^n$.
		\item Discretization of time $t\in[0,\infty)$ via $t_0=0$, $t_1=\Delta_1$, $t_2=2\Delta_1$, $\ldots$, $t_k=k\Delta_1$.
        \item Discretization of the interval $[0,T]$ with step-size $\Delta_2$ such that $T/\Delta_2=K\in\mathbb{N}$.
	\item Parameter approximating the limit $r>0$ such that $r/\Delta_2=R\in \mathbb{N}$.
	\end{enumerate}

  \item[Available at time $t_k$] \hfill
  	\begin{enumerate}
		\item Current state: $\x_{t_k}\in\mathbb{R}^n$.
	\end{enumerate}

  \item[At time $t_k$ do:] \hfill
	\begin{enumerate}
    	\item Simulate $N$ times the following discrete-time approximation of (\ref{montecarlosystemuncontrolled})
		$$
			Z^{0,\x_{t_k}}_{t_{s+1}} = Z^{0,\x_{t_k}}_{t_{s}} + f(Z^{0,\x_{t_k}}_{s})\Delta_2 + g(Z^{0,\x_{t_k}}_{t_s})W^{\Delta_2}_{t_s}
		$$
		over $t_s\in\{0,\Delta_2, \ldots, s\Delta_2,\ldots\,K\Delta_2\}$ where $W^{\Delta_2}_{t_s}\sim\mathcal{N}(0,\Delta_2\cdot\mathrm{I})$. Simulation can be parallelised.
	\item Let
		$$
			z^{0,\x_{t_k}}_{0:K}(i) := \{z^{0,\x_{t_k}}_{0}(i)=\x_{t_k},\, z^{0,\x_{t_k}}_2(i),\, \ldots,\, z^{0,\x_{t_k}}_K(i)  \}
		$$
		be the ordered set of sample points along the simulated discretised trajectory on the $i^{th}$ simulation run.
		\item For each sampled trajectory $i\in\{1,\ldots, N\}$ compute
		$$
			\widehat{W}(i)=\sum_{j=1}^R h^{-1}(z^{0,\x_{t_k}}_{j-1}(i))g(z^{0,\x_{t_k}}_{j-1}(i))\left(w^{\Delta_2}_j(i) -w^{\Delta_2}_{j-1}(i)\right)
        		$$		
		where $w^{\Delta_2}_j(i)$ are the sample points of $W^{\Delta_2}_{t_j}$ used previously to generate the trajectory $z^{0,\x_{t_k}}_{0:K}(i)$.
		\item For each sampled trajectory $i\in\{1,\ldots, N\}$ compute
		\begin{eqnarray}
			\eta(i)=\phi(z_K^{0,\x_{t_k}}(i))\, + \sum_{j=0}^{K-1} \ell(z_j^{0,\x_{t_k}}(i))\Delta_2 \quad\nonumber
		\end{eqnarray}
		\item Compute
		$$
			\widehat{u}_{t_k}(\x_{t_k}) = \frac{1}{\sum_{i=1}^N \exp[-\tfrac{1}{\gamma} \eta(i)]} \sum_{i=1}^N \exp[-\tfrac{1}{\gamma}
        \eta(i)]\,\frac{\widehat{W}(i)}{r} 
		$$
		which gives a (naive) Monte Carlo approximation of the optimal control. Let $\widehat{u}_{t}(\x_{t}) = \widehat{u}_{t_k}(\x_{t_k})$ over $t\in[t_k,t_{k+1}) = [k\Delta_1,(k+1)\Delta_1)$.
	\end{enumerate}
\end{description}
}
\noindent\makebox[\linewidth]{\rule{\textwidth}{1pt}}
\end{table}

This algorithm is easily implementable. The numerical approximation of the stochastic differential equation (\ref{montecarlosystemuncontrolled}) is known as the Euler-Maruyama method and is the simplest numerical scheme for approximating stochastic differential equations. This numerical approximation may be generalised \cite{kloeden1992numerical} although care must be taken to ensure that sufficient gains warrant the sharp increase in complexity that accompanies higher-order numerical approximation schemes.

The error in computing the approximate control signal at the discrete time sites is a mix of the error introduced due to the Monte Carlo sampling (known as the statistical error) and the error introduced due to the approximation of the limit and the time-discretisation (known as the discretisation error); see also \cite{giles2008multilevel,bertoli2015error}. At those specific discretised time sites we note the following result.

\begin{proposition}\label{centrallimitprop}
Suppose Assumptions \ref{assump1} and \ref{assump4} and the modelling hypotheses employed to this point hold. Suppose also that the system and value functionals are sufficiently regular. Given $x\in\Rn$, suppose Algorithm \ref{algorithm1} is used to compute ${\uu}_t(x)$. Then there exists a positive constant $\mu_{\Delta_2}$, a function $\mu(x)$ satisfying $|\mu(x)|\leq \mu_{\Delta_2}$ and a matrix $\Sigma(x)$ such that
$$
	\sqrt{N}\left (\widehat{u}_t(x)-u^*_t(x)-\mu(x)\right) ~{\rightarrow} ~~\mathcal{N} (0,\Sigma(x))
$$
where convergence is `in distribution' with the number, $N$, of Monte Carlo runs; see Algorithm \ref{algorithm1}. Also, $\lim_{\Delta_2\rightarrow 0} \mu_{\Delta_2}=0$.
\end{proposition}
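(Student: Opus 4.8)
The plan is to recognise the control estimate returned by Algorithm~\ref{algorithm1} as a ratio of two independent, identically distributed sample means and then to combine the multivariate central limit theorem with the delta method, treating the time-discretisation of the driving diffusion as a deterministic bias. Fix the current state $x=\x_{t_k}\in\Rn$; since the estimator at a discretised time site is built from the $N$ independent simulation runs with $x$ held fixed, all randomness here is over the Monte Carlo samples and the closed-loop structure of the overall control scheme plays no role. For the $i$-th run put
$$
	B(i):=\exp\bigl[-\tfrac1\gamma\eta(i)\bigr]\in(0,1],\qquad A(i):=B(i)\,\frac{\widehat W(i)}{r}\in\Rm ,
$$
so that $\widehat u_t(x)=\bar A_N/\bar B_N$ with $\bar A_N=\tfrac1N\sum_{i=1}^N A(i)$ and $\bar B_N=\tfrac1N\sum_{i=1}^N B(i)$, and the pairs $\{(A(i),B(i))\}_{i=1}^N$ are i.i.d.\ copies of a pair $(A,B)$ built from the Euler--Maruyama path $\{z_j\}_{j=0}^K$ started at $x$ together with its driving increments.

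I would first establish the second-moment bounds that make a joint CLT applicable. Because the running and terminal costs are non-negative, $\eta(i)\ge0$ and hence $B(i)\in(0,1]$ is bounded with $\bar b_{\Delta_2}(x):=\E B(1)>0$. For the numerator, $|A(i)|\le r^{-1}|\widehat W(i)|$, and $\widehat W(i)=\sum_{j=1}^R h^{-1}(z_{j-1})g(z_{j-1})(w_j-w_{j-1})$ is a sum of martingale differences, so $\E|\widehat W(1)|^2=\Delta_2\sum_{j=1}^R\E|h^{-1}(z_{j-1})g(z_{j-1})|^2$; the Lipschitz (linear-growth) hypotheses give finite moments of all orders for the Euler iterates $z_j$, and the assumed regularity of the system functionals controls $h^{-1}g$ along these iterates, so $\E|A(1)|^2<\infty$. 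Consequently the $(m+1)\times(m+1)$ covariance $C_{\Delta_2}(x)$ of $(A(1),B(1))$ is finite and the multivariate CLT yields
$$
	\sqrt N\left(\begin{pmatrix}\bar A_N\\ \bar B_N\end{pmatrix}-\begin{pmatrix}\bar a_{\Delta_2}(x)\\ \bar b_{\Delta_2}(x)\end{pmatrix}\right)\;\xrightarrow{d}\;\mathcal{N}\bigl(0,C_{\Delta_2}(x)\bigr),\qquad \bar a_{\Delta_2}(x):=\E A(1).
$$

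Next I would apply the delta method. The map $G(a,b)=a/b$ is $C^1$ on $\Rm\times(0,\infty)$ and, since $\bar b_{\Delta_2}(x)>0$, differentiable at $(\bar a_{\Delta_2}(x),\bar b_{\Delta_2}(x))$ with Jacobian $J=\bigl[\,\bar b_{\Delta_2}^{-1}\mathrm I_m\ \big|\ -\bar a_{\Delta_2}\bar b_{\Delta_2}^{-2}\,\bigr]$; hence
$$
	\sqrt N\left(\widehat u_t(x)-\frac{\bar a_{\Delta_2}(x)}{\bar b_{\Delta_2}(x)}\right)\;\xrightarrow{d}\;\mathcal{N}\bigl(0,\Sigma(x)\bigr),\qquad \Sigma(x):=J\,C_{\Delta_2}(x)\,J^\top .
$$
It remains to isolate the bias. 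Set $\mu(x):=\bar a_{\Delta_2}(x)/\bar b_{\Delta_2}(x)-u^*_t(x)$, a deterministic function of $x$. By Proposition~\ref{optimalcontrolMonteCarlo}, $u^*_t(x)$ is the $r\to0$ limit of the ratio of the corresponding \emph{continuous-time} path expectations, so $\mu(x)$ collects only the error of using a finite parameter $r$ together with the weak error of replacing the diffusion $Z^{t,x}$ by its Euler--Maruyama approximation of step $\Delta_2$; since Algorithm~\ref{algorithm1} couples these through $r=R\Delta_2$ with $R$ a fixed integer, both shrink with $\Delta_2$. Standard weak-error analysis for the Euler--Maruyama scheme (Talay--Tubaro-type expansions; cf.\ \cite{kloeden1992numerical}), valid under the assumed regularity, then gives a uniform bound $|\mu(x)|\le\mu_{\Delta_2}$ with $\mu_{\Delta_2}\to0$ as $\Delta_2\to0$. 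Substituting $\bar a_{\Delta_2}/\bar b_{\Delta_2}=u^*_t(x)+\mu(x)$ into the last display gives the claim.

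The main obstacle I anticipate is making the phrase ``sufficiently regular'' precise enough that it simultaneously (i) guarantees $\E|\widehat W(1)|^2<\infty$ so that $C_{\Delta_2}(x)$ exists (and, if one wants non-degeneracy of $\Sigma(x)$, that the control covariance does not collapse), and (ii) delivers a weak error of order $\Delta_2$ for the Euler scheme, uniformly in the quantities entering $\mu(x)$; the conceptual crux is cleanly separating the exact finite-$r$ content of Proposition~\ref{optimalcontrolMonteCarlo} from the discretisation error, whereas the CLT-plus-delta-method step is otherwise routine.
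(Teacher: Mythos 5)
Your proposal is correct and follows essentially the same route as the paper: the same decomposition of the error into a deterministic bias $\mu(x)$ (finite-$r$ truncation of the limit in Proposition~\ref{optimalcontrolMonteCarlo} plus the Euler--Maruyama weak error, both vanishing with $\Delta_2$ since $r=R\Delta_2$) and a Monte Carlo fluctuation obeying a CLT. The only cosmetic difference is that you derive the CLT for the ratio estimator explicitly via the multivariate CLT and the delta method, whereas the paper packages the identical ratio-of-sample-means argument as the standard central limit theorem for self-normalized importance sampling and cites it, verifying the integrability condition from the boundedness of the exponential weight.
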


\begin{proof}
	Let $u^*_t$ denote the optimal control defined by (\ref{control formula1}). For a fixed $r>0$ approximate the limit defining
$$
u^*_{t,r}=\frac{1}{r} \frac{\E\left[e^{-\frac{1}{\lambda}\left(\phi (Z^{t,x}_{t+T}) +\int_{t}^{t+T} \ell (Z^{t,x}_\tau)d\tau \right)} \int_0^r h^{-1}(Z^{t,x}_\tau)g(Z^{t,x}_\tau)dW_\tau   \right]}{\E\left[e^{-\frac{1}{\lambda}\left(\phi(Z^{t,x}_{t+T}) +\int_{t}^{T+t} \ell (Z^{t,x}_\tau)d\tau \right)}\right]}
$$
For $r$ small enough we have $|u^*_t(x) - u^*_{t,r}(x)| < \epsilon$ with $\epsilon$ to be chosen later. Let $\widetilde{u}^*_t$ be the approximation to $u^*_{t,r}$ found purely as a result of the discretized path approximation (associated with step-size $\Delta_2$). Then, given the convergence results for the Euler-Maruyama method \cite{kloeden1992numerical, bertoli2015error}, it follows that for $\Delta_2$ small enough, there exists a constant $c$ such that $|u^*_{t,r}(x) - \widetilde{u}^*_t(x)| \leq c \Delta_2 $ for all $r>0$. Using the triangular inequality, 
\begin{equation}
\label{weakconvergence}
  |u^*_t(x) - \widetilde{u}^*_t(x)| \leq c\Delta_2 + \epsilon=:\mu_{\Delta_2}
\end{equation}
Choosing $\epsilon\simeq\Delta_2$ yields  $\lim_{\Delta_2\rightarrow 0} \mu_{\Delta_2}=0$. We note that $\widehat{u}_t(x)$ is a Monte Carlo approximation of $\widetilde{u}^*_t(x)$. We denote by $W_{0:K}$ a realised sequence of the discretized Brownian motion associated with the Euler-Maruyama discretization of (\ref{montecarlosystemuncontrolled}) and by $\widetilde{Z}^{0,x}_{0:K}(W_{0:K})$ the discrete path associated with it. Call $\mathbb{P}$ the natural measure on the path space $\{W_{0:K}\}$. Define
$$
G(W_{0:K}):=\exp\left[-\frac{1}{\gamma} \left( \phi(\widetilde{Z}^{0,x}_K(W_{0:K})) + \sum_{j=1}^{K-1}\ell(\widetilde{Z}^{0,x}_j(W_{0:K}))\Delta_2\right) \right]
$$
and consider the path measure $\mathbb{Q}$ obtained by the relation
$$
d\mathbb{Q} = \frac{G(W_{0:K})}{\mathbb{E}_\mathbb{P} [G(W_{0:K})]} d\mathbb{P}
$$
Define the function $F(W_{0:K})=\sum_{j=1}^R\,h^{-1}g(\widetilde{Z}^{0,x}_j(W_{0:K}))\, (W_j - W_{j-1})$, i.e. the sum of the first $R$ Brownian increments. We have that
$$
r\widehat{u}_t(x)=\mathbb{E}_\mathbb{Q} [F(W_{0:K})]
$$
	
When simulating paths in Algorithm \ref{algorithm1}, we simulate from the measure $d\widetilde{\mathbb{Q}}:=G(W_{0:K})d\mathbb{P}$ and use self-normalized importance sampling to compute $r\widehat{u}_t(x)$. We know \cite{cappe2005inference}, that self-normalized importance sampling is asymptotically unbiased and moreover a central limit theorem holds if
$$
\int[1+F^2]\left(\frac{d\mathbb{Q}}{d\widetilde{\mathbb{Q}}}\right)^2d\widetilde{\mathbb{Q}} <\infty
$$
Here, we have
$$
\int[1+F^2]\left(\frac{d\mathbb{Q}}{d\widetilde{\mathbb{Q}}}\right)^2d\widetilde{\mathbb{Q}}  = \frac{1}{\E_\mathbb{P}[G(W_{0:K})]^2}\left(\int (1+ F^2)d\widetilde{\mathbb{Q}}\right)
$$
Moreover $\int F^2d\widetilde{\mathbb{Q}}=\E_\mathbb{P}[F^2G]<\infty$ thanks to the fact that $G$ is bounded. Therefore we have
$$
r\sqrt{N}\left (\widehat{u}_t(x)-\widetilde{u}^*_t(x)\right) ~{\rightarrow}~~ \mathcal{N} (0,\Sigma(x))
$$
where $\Sigma(x)= \int\left(\frac{d\mathbb{Q}}{d\widetilde{\mathbb{Q}}}\right)^2[F-\mathbb{Q}(F)]^2d\widetilde{\mathbb{Q}}$. Convergence is in the sense of distribution with $N$. Now divide by $r$, add and subtract $u^*_t(x)$, call $\mu(x)=u^*_t(x)- \widetilde{u}^*_t(x)$ and use (\ref{weakconvergence}) to prove the convergence result. 
\end{proof}

The asymptotic bias in the preceding error result can be reduced by decreasing $\Delta_2$ or via a reduction in the horizon length $T$. The variance can be reduced by increasing $N$ or through some variation of naive sampling such as improved importance sampling or additionally via particle methods and resampling schemes \cite{cappe2005inference,kappen2005path,theodorou2012relative,morzfeld2014implicit} etc. The role of the parameter $r$ with respect to the variance and the bias in the error approximation can be important; see \cite{bertoli2015error} for a first study of this issue. Note also that 
$$
\Sigma(x)=\frac{\var_\mu(F)}{\E_\mathbb{P}[G]}
$$
and therefore the variance is intimately connected to $\E_\mathbb{P}[G]$, i.e. the interplay between the cost and dynamics of the uncontrolled SDE. Such performance questions may be explored in future work; see also \cite{thijssen2014path,bertoli2015error}.

Going forward with the analysis we use the following assumption.

\begin{assumption}
\label{normaldistribution}
	Suppose that, for $N$ big enough, $(\widehat{u}_t(x)-u^*_t(x)) \sim \mathcal{N} (\mu(x),\frac{1}{N}\Sigma(x))$ and $\Sigma(x)$ is bounded.
\end{assumption}

This assumption is just an invocation of the central limit type of result in Proposition \ref{centrallimitprop} (which states that with $N$ increasing, the distribution of the random part of the control approximation can be assumed Gaussian)\footnote{The point of this assumption is to impose normality on the distribution of the error $\widehat{u}_t-u^*_t$. Regardless of the distribution, it is true that the variance of the error decreases proportionally with increasing $N$ (at the rate $1/N$) and that the bias decreases continuously with $\Delta_2$. In practice, with $N$ large enough, any error in applying this assumption is small and can be quantified via bounds of the Berry-Esseen type \cite{cappe2005inference}.}.

We can now state the main stability result of this section.

\begin{theorem}
Suppose Assumptions \ref{assump1}, \ref{assump2}, \ref{derivatives}, \ref{derivatives2}, \ref{assump4}, \ref{normaldistribution} and the modelling hypotheses outlined to this point hold. Define $\beta:=c_5(1+\frac{1}{\delta^p})$ and $\lambda:=\frac{c_2}{\beta}$. Given $x\in\Rn$, suppose Algorithm \ref{algorithm1} is used to compute ${\uu}_t(x)$. With $\Delta_1,\Delta_2>0$ small enough and $N$ large enough, there exits $\lambda\geq\theta>0$ and a pair of positive constants $M_1,M_2>0$ such that
$$
\E|\X^{0,x_0,\uu}(t)|^p\leq  M_1\beta e^{-\frac{\theta}{4} t}|x_0|^p +M_2m_\delta,   \quad\quad \forall ~ t\geq 0
$$
and $\lim_{\Delta_1,\Delta_2\rightarrow 0, N\rightarrow\infty} \theta=\lambda$ (where $\lambda$ is the rate of convergence of the optimal control; see Theorem \ref{deterministicrobustnesstheorem}).
\end{theorem}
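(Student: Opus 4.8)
The plan is to recognise this theorem as essentially the conjunction of the Monte Carlo error analysis of Proposition \ref{centrallimitprop} (together with Assumption \ref{normaldistribution}) and the robustness-plus-sampling machinery already assembled in Corollary \ref{mixederror} and Corollary \ref{sampledcontrolcorollary}. Concretely, the controlled system here, equation (\ref{montecarlosystemcontrol}), is exactly of the form (\ref{gaussianperturbedsystemcontrol}), so the hypotheses of Corollary \ref{mixederror} become available once we have a mixed (deterministic bias plus Gaussian) characterisation of the controller error, and the hypotheses of Corollary \ref{sampledcontrolcorollary} become available once we additionally account for the sample-and-hold at step size $\Delta_1$. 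The modelling hypotheses and Assumptions \ref{assump1}, \ref{assump2}, \ref{derivatives}, \ref{derivatives2}, \ref{assump4} listed in the statement are precisely those needed to invoke these earlier results.

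First I would use Proposition \ref{centrallimitprop} and Assumption \ref{normaldistribution} to write, for $N$ large, $(\uu_t(x)-u^*_t(x))\sim\mathcal{N}(\mu(x),\tfrac{1}{N}\Sigma(x))$ with $|\mu(x)|\le\mu_{\Delta_2}$, $\mu_{\Delta_2}\to 0$ as $\Delta_2\to 0$, and $\Sigma(x)$ bounded, say $|\Sigma(x)|\le S$ in operator norm uniformly in $x$. Setting $\epsilon_1:=\mu_{\Delta_2}$ and $\epsilon_2:=S/N$, both bounds are uniform in $x\in\Rn$, and for $\Delta_2$ small enough and $N$ large enough we get $\epsilon_1<\overline{\epsilon_1}$ and $\epsilon_2<\overline{\epsilon_2}$, the thresholds furnished by Corollary \ref{mixederror}. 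This verifies every hypothesis of Corollary \ref{mixederror} for the idealised (computed-for-all-$x$) control $\uu_t$: it is an approximately optimal control of the required mixed-error type, so the corresponding solution of (\ref{montecarlosystemcontrol}) is exponentially $p$-stable to $\{v<m_\delta\}$ with a rate $\theta\le\lambda$, namely $\theta=\theta_\epsilon$ with $\epsilon=(\epsilon_1,\epsilon_2)$, satisfying $\theta_\epsilon\to\lambda$ as $\epsilon\to 0$.

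Second I would feed this bound into Corollary \ref{sampledcontrolcorollary} (equivalently, apply Proposition \ref{sampledcontrolprop} to the estimate just obtained): since in Algorithm \ref{algorithm1} the control is in fact evaluated only at the instants $t_k=k\Delta_1$ and held constant on $[t_k,t_{k+1})$, there is a $\overline{\Delta}>0$ and constants $M_1,M_2>0$ such that for all $0<\Delta_1\le\overline{\Delta}$ the sampled-and-held process obeys $\E|\X^{0,x_0,\uu}(t)|^p\le M_1\beta e^{-\frac{\theta}{4}t}|x_0|^p+M_2 m_\delta$ for all $t\ge 0$, which is the claimed estimate. Finally, taking limits: as $\Delta_2\to 0$ and $N\to\infty$ we have $\epsilon=(\epsilon_1,\epsilon_2)\to 0$, hence $\theta=\theta_\epsilon\to\lambda$; the limit $\Delta_1\to 0$ is needed only to keep $\Delta_1\le\overline{\Delta}$ (which may itself depend on $\theta$) and does not otherwise affect the rate.

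The step I expect to require the most care is the passage from the pointwise-in-$x$ central limit statement of Proposition \ref{centrallimitprop} to a genuinely uniform-in-$x$ error characterisation, since the Lyapunov / infinitesimal-generator arguments behind Theorems \ref{deterministicrobustnesstheorem} and \ref{probabilisticrobustnesstheorem} and Corollary \ref{mixederror} need bounds that hold for all $x\in\Rn$. Assumption \ref{normaldistribution} does most of this work (it both imposes exact normality for $N$ large and asserts $\Sigma(x)$ bounded), and the bias bound $|\mu(x)|\le\mu_{\Delta_2}$ is already uniform from the triangle-inequality estimate (\ref{weakconvergence}); so under these standing assumptions what remains is bookkeeping rather than new analysis. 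It is worth noting that no hypothesis on $\uu_t$ near the origin is required — the earlier results only ask that $\widehat{\LL v}\le 0$ outside the ball $\B_\delta$, which is exactly why one obtains stability to the neighbourhood $\{v<m_\delta\}$ rather than to the origin itself — so the possibility that the Monte Carlo estimator is inaccurate for $x$ close to $0$ is harmless here.
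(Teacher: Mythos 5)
Your proposal is correct and follows essentially the same route as the paper's own proof: invoke Proposition \ref{centrallimitprop} and Assumption \ref{normaldistribution} to cast the Monte Carlo error as a mixed deterministic-plus-Gaussian error with bias controlled by $\Delta_2$ and variance controlled by $N$, apply Corollary \ref{mixederror}, and then absorb the sample-and-hold via Corollary \ref{sampledcontrolcorollary} with $\Delta_1$ small. Your additional remarks on uniformity in $x$ and on behaviour near the origin are elaborations of points the paper leaves implicit, not a different argument.
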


\begin{proof}
Since the error $\widehat{u}_{t_k}-u^*_{t_k}$ is of the mixed type, we call on Corollary \ref{mixederror}. From Proposition \ref{centrallimitprop} it follows that there exists $\Delta_2$ small enough so that the deterministic part of the controller approximation error is small. Similarly, from Proposition \ref{centrallimitprop} it follows that there exists $N$ large enough so that the variance $\Sigma(x)$ is small. Assumption \ref{normaldistribution} imposes normality on the error distribution. Corollary \ref{mixederror} applies immediately. Picking $\Delta_1$ small enough to invoke Corollary \ref{sampledcontrolcorollary} completes the proof.
\end{proof}

\section{Concluding Remarks}

 In this work we explored the stability and the convergence properties of nonlinear stochastic RHC when the optimal controller is computed only approximately. We considered a number of general classes of controller approximation error including deterministic and probabilistic errors and even controller sample and hold errors. In each case, it is shown that the controller approximation errors do not accumulate (even over an infinite time frame) and the process converges exponentially fast to a small neighbourhood of the origin. We also overviewed an approximation method for computing the optimal RHC for nonlinear stochastic continuous-time systems. This method is based on Monte Carlo integration approximation and originates in the work of Kappen \cite{kappen2005linear,kappen2005path}.

 While we study the stability of various RHC approximations, we did not consider any measure of performance. For example, it would be of interest to analyze (path-wise) the running cost error that arises due to the approximation of the optimal controller. Inverse optimality and optimality gaps as studied in \cite{magni1997stability} would also be of interest here.

The incorporation of state constraints in RHC is common \cite{mayne2000constrained}. We note that it may be natural in some cases to incorporate state constraints in the Monte Carlo based approximation algorithm detailed herein. For example, state constraints may be enforced by simply restricting the evolution of the sampled trajectories (e.g. via dictating that certain regions of the state space hold zero probability).

Efficient sampling and Monte Carlo simulation \cite{kappen2005path,morzfeld2014implicit} that reduces the variance and thus the error in the Monte Carlo based controller approximation is of interest. Other computational aspects of this approximation are of interest, particularly as they apply to high-dimensional implementation.

Finally, we mention that extensions which account for partial-information feedback may be important, particularly in the stochastic framework where true state feedback is overly restrictive. In this setting, the coupling of stochastic RHC, and particularly the Monte Carlo approximation algorithm, with sequential Monte Carlo estimation/filtering (e.g. particle filtering \cite{cappe2005inference}) would be a natural topic for further study. Extensions to more general dynamical model settings may also be considered; e.g. systems with time-varying delays, high-order stochastic systems, etc.

\section*{Appendix: Proof of Lemma \ref{lemmavbounds}}

We start with the lower bound. Recall that $v(x)=\E[ \phi (X_T)+\int_0^T \ell(X_s, u^*_s)ds]$ where we use the shorthand $X_s$ for the solution of the system (\ref{generalsystemcontrol}) driven by the optimal control with initial condition $x$. We have
$$
	\E[\phi (X_T)] ~\geq~ c_2 \E|X_T|^p ~\geq~ c_2| \E[X_T]|^p
$$
using the modelling hypotheses first and Jensen's inequality second. Moreover, we have
\begin{eqnarray}
	\E \left[\int_0^T\ell(X_s, u^*_s)ds \right] & \geq & c_2 \E\left[\int_0^T (|X_s|^p + |u^*_s|^p)ds \right] \qquad\mathrm{Hypothesis:}~c_2(|x|^p + |u|^p) \leq \ell(x,u)\nonumber \\
	& \geq & c_2 2^{1-p} \E\left[\int_0^T (|X_s| + |u^*_s|)^pds \right] \quad~\mathrm{Hypothesis:~equivalence~of~norms~in~} \mathbb{R}^2 \nonumber \\
	& \geq & \frac{c_2 2^{1-p} }{c_1} \E\left[\int_0^T |f(X_s,u^*_s)|^pds\right] \qquad\mathrm{Hypothesis:~see~below} \nonumber \\
	&\geq & \frac{ c_2 2^{1-p} }{T^{p-1}c_1} \E \left[\Big|\int_0^T f(X_x,u^*_s)ds\Big |^p \right] \qquad \mathrm{Jensen's~inequality~on~inner~integral}\nonumber \\
	&\geq& \frac{ c_2 2^{1-p} }{T^{p-1}c_1}\Big| \E\left[\int_0^T f(X_x,u^*_s)ds\right]\Big|^p \qquad \mathrm{Jensen's~inequality~on~outer~integral}\nonumber\\
	&=& \frac{ c_2 2^{1-p} }{T^{p-1}c_1} \Big| \E [X_T-x]\Big|^p \qquad \mathrm{Taking~the~expectation~of~the~SDE}\nonumber\\
	&=&  \frac{ c_2 2^{1-p} }{T^{p-1}c_1} \Big| x-\E [X_T]\Big|^p  \qquad\mathrm{Initial~condition~is~deterministic} \nonumber
\end{eqnarray}
\noindent where we have used the modelling hypotheses on $\ell(x,u)$, together with the fact that $|f(x,u)|\leq |f(x,u) - f(0,u)| + |f(0,u) - f(0,0) | \leq c_1(|x| + |u|)$ and where we used Jensen's inequality twice. Putting together the bounds on $\E[\phi (X_T)]$ and $\E[\int_0^T\ell(X_s, u_s)ds]$ gives
\begin{eqnarray}
	v(x) &\geq& c_2 |\E[X_T]|^p + \frac{ c_2 2^{1-p} }{T^{p-1}c_1} | x-\E [X_T]|^p ~\geq~ c \Big ( |\E[X_T]|+|x-\E[X_T]| \Big)^p ~\geq~ c \Big ( |\E[X_T]+x-\E[X_T]| \Big)^p  ~=~ c|x|^p \nonumber
\end{eqnarray}
for some constant $c>0$, where we used the fact the all norms are equivalent on a finite dimensional vector space followed by the triangle inequality. This completes the proof for the lower bound.

We now turn to the upper bound and recall that, given an arbitrary admissible control law, the cost functional $w(t, x, u)$ associated with (\ref{generalsystemcontrol}) is
$$
	w(0, x, u) := \E\left[\phi(X_{T}) + \int_0^T \ell(X_{s},u_{s})ds\right]
$$
We immediately have
$$
v(x) := \inf_{u_{s}\in\U_{[0,T]}}~w(0, x, u) \leq w(0, x, 0)
$$
where $w(0, x, 0)$ denotes the cost found after applying a constant zero control $u_t\equiv 0$. Going forward, write $X_t :=X^{0,x,0}_t$ for the solution of (\ref{generalsystemcontrol}) with $X_0=X^{0,x,0}_0=x$ and a constant zero control $u_t\equiv 0$. Then
$$
	dX_t = {f}(X_t,0)dt + {g}(X_t,0)dW_t
$$
Note that if $u_t=0$ is not admissible we may substitute some other (sub-optimal) constant control signal. Then, for all $p\geq1$ with $t\geq 0$, its known \cite{touzi2012optimal} that given the existence of solutions to (\ref{generalsystemcontrol}) it holds that
$$
	\E|X_t |^p \leq c(1+|x|^p)e^{ct}
$$
\noindent for some finite $c>0$. This, together with the assumptions on the cost, gives
\begin{eqnarray}
w(0, x, 0) &=& \E\left[\phi(X_{T}) + \int_0^T \ell(X_{s},0)ds\right] \nonumber\\
        & \leq& \E\left[ c_3(1+|X_T|^p)\right] + \E\left[ \int_0^T c_3(1+|X_s|^p + 0)ds\right] \nonumber\\
        &\leq& c_3\left(1+T + |x|^pe^{cT} + \int_0^T|x|^pe^{cs}ds\right) ~\leq~ c_3\left(1+T + |x|^pe^{cT} +|x|^p\frac{e^{cT}-1}{c}\right) \nonumber
\end{eqnarray}
which, after gathering constants, completes the proof concerning the upper-bound.

Bringing everything together, it follows that there exists a pair of positive constants $c_4, c_5$, depending only on $p, T$, $c_1$, $c_2,c_3$, such that $c_4|x|^p \leq v(x)\leq c_5(1+|x|^p),~ \forall \, x \in \Rn$ and $v(x)\rightarrow\infty$ with $|x|\rightarrow\infty$. \qed


\begin{thebibliography}{VDBWK10}

\bibitem[Arn74]{arnold1974stochastic}
L.~Arnold.
\newblock {\em Stochastic Differential Equations: Theory and Applications}.
\newblock John Wiley and Sons, New York, USA, 1974.

\bibitem[BB]{bertoli2015error}
F.~Bertoli and A.N. Bishop.
\newblock An error analysis in the limit approximation in path integral
  control.
\newblock {\em Technical Note}.

\bibitem[BHL12]{buckdahn2012regularity}
R.~Buckdahn, J.~Huang, and J.~Li.
\newblock Regularity properties for general {HJB} equations: {A} backward
  stochastic differential equation method.
\newblock {\em SIAM Journal on Control and Optimization}, 50(3):1466--1501,
  2012.

\bibitem[BK14]{bierkens2014explicit}
J.~Bierkens and H.J. Kappen.
\newblock Explicit solution of relative entropy weighted control.
\newblock {\em Systems \& Control Letters}, 72(10):36--43, October 2014.

\bibitem[CMR05]{cappe2005inference}
O.~Capp{\'e}, E.~Moulines, and T.~Ryd{\'e}n.
\newblock {\em Inference in Hidden Markov Models}.
\newblock Springer, New York, USA, 2005.

\bibitem[DNMS96]{de1996robustness}
G.~De~Nicolao, L.~Magni, and R.~Scattolini.
\newblock On the robustness of receding-horizon control with terminal
  constraints.
\newblock {\em IEEE Transactions on Automatic Control}, 41(3):451--453, March
  1996.

\bibitem[DNMS98]{de1998stabilizing}
G.~De~Nicolao, L.~Magni, and R.~Scattolini.
\newblock Stabilizing receding-horizon control of nonlinear time-varying
  systems.
\newblock {\em IEEE Transactions on Automatic Control}, 43(7):1030--1036, July
  1998.

\bibitem[Doo53]{doob1953stochastic}
J.L. Doob.
\newblock {\em Stochastic Processes}.
\newblock John Wiley \& Sons, New York, USA, 1953.

\bibitem[DVGS11]{de2011particle}
J.P. De~Villiers, S.J. Godsill, and S.S. Singh.
\newblock Particle predictive control.
\newblock {\em Journal of Statistical Planning and Inference},
  141(5):1753--1763, 2011.

\bibitem[FS06]{fleming2006controlled}
W.H. Fleming and H.M. Soner.
\newblock {\em Controlled Markov Processes and Viscosity Solutions}.
\newblock Springer, New York, USA, 2nd edition, 2006.

\bibitem[Gil08]{giles2008multilevel}
M.B. Giles.
\newblock {Multilevel Monte Carlo Path Simulation}.
\newblock {\em Operations Research}, 56(3):607--617, 2008.

\bibitem[HMS03]{higham2003exponential}
D.J. Higham, X.~Mao, and A.M. Stuart.
\newblock Exponential mean-square stability of numerical solutions to
  stochastic differential equations.
\newblock {\em London Mathematical Society: Journal of Computation and
  Mathematics}, 6:297--313, 2003.

\bibitem[JYH01]{jadbabaie2001unconstrained}
A.~Jadbabaie, J.~Yu, and J.~Hauser.
\newblock Unconstrained receding-horizon control of nonlinear systems.
\newblock {\em IEEE Transactions on Automatic Control}, 46(5):776--783, May
  2001.

\bibitem[Kap05a]{kappen2005linear}
H.J. Kappen.
\newblock Linear theory for control of nonlinear stochastic systems.
\newblock {\em Physical Review Letters}, 95(20):200201, November 2005.

\bibitem[Kap05b]{kappen2005path}
H.J. Kappen.
\newblock Path integrals and symmetry breaking for optimal control theory.
\newblock {\em Journal of Statistical Mechanics: Theory and Experiment},
  2005(11):P11011, November 2005.

\bibitem[KD01]{kushner2013numerical}
H.~Kushner and P.G. Dupuis.
\newblock {\em Numerical methods for stochastic control problems in continuous
  time}.
\newblock Springer, New York, USA, 2001.

\bibitem[KH06]{kwon2006receding}
W.H. Kwon and S.~Han.
\newblock {\em Receding Horizon Control: Model Predictive Control for State
  Models}.
\newblock Springer-Verlag, London, 2006.

\bibitem[Kha11]{khasminskii2011stochastic}
R.~Khasminskii.
\newblock {\em Stochastic Stability of Differential Equations}.
\newblock Springer-Verlag, Berlin, Germany, 2nd edition, 2011.

\bibitem[KMLV09]{kantas2009sequential}
N.~Kantas, J.M. Maciejowski, and A.~Lecchini-Visintini.
\newblock {Sequential Monte Carlo for Model Predictive Control}.
\newblock In {\em Nonlinear Model Predictive Control: Towards New Challenging
  Applications}, pages 263--273. Springer-Verlag, Berlin, Germany, 2009.

\bibitem[KP99]{kloeden1992numerical}
P.E. Kloeden and E.~Platen.
\newblock {\em Numerical Solution of Stochastic Differential Equations}.
\newblock Springer-Verlag, Berlin, Germany, 3rd edition, 1999.

\bibitem[Kry72]{krylov1972control}
N.V. Krylov.
\newblock Control of a solution of a stochastic integral equation.
\newblock {\em Theory of Probability \& Its Applications}, 17(1):114--130,
  1972.

\bibitem[Kry08]{krylov2008controlled}
N.V. Krylov.
\newblock {\em Controlled Diffusion Processes}.
\newblock Springer, New York, USA, 2008.

\bibitem[LT07]{li2007iterative}
W.~Li and E.~Todorov.
\newblock Iterative linearization methods for approximately optimal control and
  estimation of non-linear stochastic system.
\newblock {\em International Journal of Control}, 80(9):1439--1453, September
  2007.

\bibitem[McE06]{mceneaney2006max}
W.M. McEneaney.
\newblock {\em Max-plus methods for nonlinear control and estimation}.
\newblock Springer, New York, USA, 2006.

\bibitem[MDNSA03]{magni2003robust}
L.~Magni, G.~De~Nicolao, R.~Scattolini, and F.~Allg{\"o}wer.
\newblock Robust model predictive control for nonlinear discrete-time systems.
\newblock {\em International Journal of Robust and Nonlinear Control},
  13(3-4):229--246, 2003.

\bibitem[MM90]{mayne1990receding}
D.Q. Mayne and H.~Michalska.
\newblock Receding horizon control of nonlinear systems.
\newblock {\em IEEE Transactions on Automatic Control}, 35(7):814--824, July
  1990.

\bibitem[MM93]{michalska1993robust}
H.~Michalska and D.Q. Mayne.
\newblock Robust receding horizon control of constrained nonlinear systems.
\newblock {\em IEEE Transactions on Automatic Control}, 38(11):1623--1633,
  November 1993.

\bibitem[MM12]{mahmood2012lyapunov}
M.~Mahmood and P.~Mhaskar.
\newblock Lyapunov-based model predictive control of stochastic nonlinear
  systems.
\newblock {\em Automatica}, 48(9):2271--2276, September 2012.

\bibitem[MNVDS01]{magni2001receding}
L.~Magni, H.~Nijmeijer, and A.J. Van Der~Schaft.
\newblock A receding--horizon approach to the nonlinear {$H_{\infty}$} control
  problem.
\newblock {\em Automatica}, 37(3):429--435, March 2001.

\bibitem[Mor14]{morzfeld2014implicit}
M.~Morzfeld.
\newblock Implicit sampling for path integral control.
\newblock In {\em Proceedings of the 2014 American Control Conference}, pages
  1839--1844, Portland, USA, June 2014.

\bibitem[MRRS00]{mayne2000constrained}
D.Q. Mayne, J.B. Rawlings, C.V. Rao, and P.O.M. Scokaert.
\newblock Constrained model predictive control: {Stability and optimality}.
\newblock {\em Automatica}, 36(6):789--814, June 2000.

\bibitem[MS97]{magni1997stability}
L.~Magni and R.~Sepulchre.
\newblock Stability margins of nonlinear receding-horizon control via inverse
  optimality.
\newblock {\em Systems \& Control Letters}, 32(4):241--245, April 1997.

\bibitem[PZ95]{parisini1995receding}
T.~Parisini and R.~Zoppoli.
\newblock A receding-horizon regulator for nonlinear systems and a neural
  approximation.
\newblock {\em Automatica}, 31(10):1443--1451, October 1995.

\bibitem[PZ98]{parisini1998neural}
T.~Parisini and R.~Zoppoli.
\newblock Neural approximations for infinite-horizon optimal control of
  nonlinear stochastic systems.
\newblock {\em IEEE Transactions on Neural Networks}, 9(6):1388--1408, June
  1998.

\bibitem[SH11]{stahl2011pf}
D.~Stahl and J.~Hauth.
\newblock {PF-MPC: Particle filter-model predictive control}.
\newblock {\em Systems \& Control Letters}, 60(8):632--643, August 2011.

\bibitem[TBS10]{theodorou2010generalized}
E.A. Theodorou, J.~Buchli, and S.~Schaal.
\newblock A generalized path integral control approach to reinforcement
  learning.
\newblock {\em The Journal of Machine Learning Research}, 11:3137--3181,
  November 2010.

\bibitem[TK15]{thijssen2014path}
S.~Thijssen and H.~J. Kappen.
\newblock Path integral control and state-dependent feedback.
\newblock {\em Physical Review E}, 91:032104, March 2015.

\bibitem[Tod09]{todorov2009efficient}
E.~Todorov.
\newblock Efficient computation of optimal actions.
\newblock {\em Proceedings of the National Academy of Sciences},
  106(28):11478--11483, July 2009.

\bibitem[Tou12]{touzi2012optimal}
N.~Touzi.
\newblock {\em Optimal stochastic control, stochastic target problems, and
  backward {SDE}}.
\newblock Springer, New York, USA, 2012.

\bibitem[TT12]{theodorou2012relative}
E.~Theodorou and E.~Todorov.
\newblock Relative entropy and free energy dualities: {Connections} to path
  integral and {KL} control.
\newblock In {\em Proc. of the 51st Annual Conference on Decision and Control
  (CDC)}, pages 1466--1473, Maui, USA, December 2012.

\bibitem[VDBWK08]{van2008graphical}
B.~Van Den~Broek, W.~Wiegerinck, and H.J. Kappen.
\newblock Graphical model inference in optimal control of stochastic
  multi-agent systems.
\newblock {\em Journal of Artificial Intelligence Research}, 32:95--122, 2008.

\bibitem[VDBWK10]{BroekUAI2010}
B.~Van Den~Broek, W.~Wiegerinck, and H.J. Kappen.
\newblock Risk sensitive path integral control.
\newblock In {\em Proc. of the 26th Conference on Uncertainty in Artificial
  Intelligence}, Catalina Island, California, July 2010.

\bibitem[WLV14]{wei2014stability}
F.~Wei and A.~Lecchini-Visintini.
\newblock On the stability of receding horizon control for continuous-time
  stochastic systems.
\newblock {\em Systems \& Control Letters}, 63:43--49, January 2014.

\bibitem[YZ99]{yong1999stochastic}
J.~Yong and X.Y. Zhou.
\newblock {\em Stochastic Controls: {Hamiltonian systems and HJB equations}}.
\newblock Springer, New York, USA, 1999.

\end{thebibliography}
\end{document}